\definecolor{darkgreen}{rgb}{0,0.5,0}
\newtheorem{theorem}{Theorem}[section]
\newtheorem{proposition}[theorem]{Proposition}
\newtheorem{definition}[theorem]{Definition}
\newtheorem{corollary}[theorem]{Corollary}
\newtheorem{lemma}[theorem]{Lemma}
\newtheorem{remark}[theorem]{Remark}
\DeclareMathOperator{\GL}{GL}
\DeclareMathOperator{\Ker}{Ker}
\DeclareMathOperator{\coker}{coker}
\DeclareMathOperator{\Supp}{Supp}
\DeclareMathOperator{\Ext}{Ext}
\DeclareMathOperator{\Hom}{Hom}
\DeclareMathOperator{\Tor}{Tor}
\DeclareMathOperator{\rank}{rank}
\DeclareMathOperator{\Rk}{rk}
\newcommand{\N}{\mathbb{N}}
\newcommand{\Z}{\mathbb{Z}}
\newcommand{\Q}{\mathbb{Q}}
\newcommand{\R}{\mathbb{R}}
\newcommand{\OO}{\mathcal{O}}
\newcommand{\U}{\mathcal{U}}
\newcommand{\id}{id}
\newcommand{\ev}{ev}
\newcommand{\w}{{\underline w}}
\newcommand{\wa}{({\underline w},\alpha)}
\DeclareMathOperator{\mrank}{\underline{rk}}
\DeclareMathOperator{\wdeg}{deg_{\underline{w}}}
\DeclareMathOperator{\wrank}{rk_{\underline{w}}}
\DeclareMathOperator{\Gr}{Gr}
\DeclareMathOperator{\Ima}{Im}
\newcommand{\Gwat}{\tilde{\mathcal{G}}_{(C,\w),\alpha}}
\newcommand{\Gwa}{\mathcal{G}_{(C,\w),\alpha}}
\newcommand{\GwL}{\mathcal{G}_{(C,\w),L}}
\newcommand{\cW}{\mathcal{W}}
\newcommand{\cV}{\mathcal{V}}
\newcommand{\cO}{\mathcal{O}}
\newcommand{\bC}{\mathbb{C}}
\newcommand{\cU}{\mathcal{U}}
\newcommand{\cHom}{\mathcal{H}om}
\newcommand{\cExt}{\mathcal{E}xt}
\numberwithin{equation}{section}
\begin{document}

\title[Coherent systems and BGN extensions]{Coherent systems and BGN extensions on nodal reducible curves}

\author{Sonia Brivio}
\address{Dipartimento di Matematica e Applicazioni,
	Universit\`a degli Studi di Milano-Bicocca,
	Via Roberto Cozzi, 55,
	I-20125 Milano, Italy}
\email{sonia.brivio@unimib.it}

\author{Filippo F. Favale}
\address{Dipartimento di Matematica e Applicazioni,
	Universit\`a degli Studi di Milano-Bicocca,
	Via Roberto Cozzi, 55,
	I-20125 Milano, Italy}
\email{filippo.favale@unimib.it}

\date{\today}
\thanks{
\textit{2020 Mathematics Subject Classification}: Primary: 14H60; Secondary: 14F06,14D20\\
\textit{Keywords}:  Coherent Systems, Nodal curves, Polarizations, Stability,  Moduli spaces, Extensions \\
Both authors are partially supported by INdAM - GNSAGA.\\
}
\maketitle

\begin{abstract}
Let $(C,\w)$ be a polarized nodal reducible curve. In this paper we consider coherent systems of type $(r,d,k)$ on $C$ with $k < r$. We prove that the moduli spaces of $\wa$-stable coherent systems stabilize for large $\alpha$ and we generalize several results known for the irreducible case when we chose a good polarization. Then, we study in details the components of moduli spaces containing coherent systems arising from locally free sheaves.
\end{abstract}

\section*{Introduction}

In this paper we deal with coherent systems on nodal reducible curves. 
Coherent systems, on a smooth curve, are pairs $(E,V)$ where $E$ is a locally free sheaf and $V$ is a subspace of global sections of $E$. Hence they can be seen as a generalisation of linear systems and they are closely related to higher rank Brill-Noether theory. They were introduced for a smooth curve in \cite{BD91,Ber,LP} under different names and have been studied extensively by several authors (see \cite{BGMN, BGN97, New11} for relevant results and \cite{Bra} for a survey). 
For any real parameter $\alpha$, the notion of $\alpha$-stability has been introduced and it gives a family of coarse moduli spaces parametrizing coherent systems  $(E,V)$ of  type $(r,d,k)$, i.e. with $\Rk(E)=r$, $\deg(E)=d$ and $\dim V= k$.  For comparison of different notions of stability in moduli theory see for instance  \cite{BB12}. 
\vspace{2mm}

Actually, for any type, there are only a finite number of distinct moduli spaces and the ``terminal'' one (which corresponds to the biggest possible choice for $\alpha$ for which the moduli space is not empty) can be described by using different approaches according to whether $k \geq  r$ or $k < r$.  In the first case, the description is related to Quot scheme of quotients of the trivial bundle of rank $k$ (see \cite{BGMN}) while in the second case the moduli space parametrizes extensions of a sheaf by the trivial bundle of rank $k$ (see \cite{BGN97}). These extensions are said BGN extensions.
\vspace{2mm}

Coherent systems can be defined even on a singular reduced curve but in this case, in order to have compact moduli spaces, one needs to consider not only locally free sheaves but also torsion free sheaves. In particular, when $C$ is a nodal curve, a coherent system is a pair $(E,V)$, where $E$ is a depth one sheaf and $V\subseteq H^0(E)$. A notion of $\w$-rank and $\w$-degree can be defined on nodal curves once we fix a polarization $\w$ on $C$. Hence, for any coherent system  one can  define the type  and the notion of $\wa$-stability, as well the $\w$-stability for  any depth one sheaf.
In \cite{KN}, for any $\alpha \in {\mathbb R}$, it has been proved the existence of coarse moduli spaces $\Gwa(r,d,k)$ parametrizing families of $\wa$-stable coherent systems of type $(r,d,k)$. For details, the reader can see Section \ref{SEC:Preliminaries}.
\vspace{2mm}

Many results which hold in the smooth case (concerning, for example, dimension, irreducibility, values of the parameters for which these moduli spaces are empty and smoothness),  have been extended to the case of irreducible nodal curves in \cite{Bho,Bho07}. The authors have begun studying coherent systems on reducible nodal curves of compact type in \cite{BFCoh},  considering coherent systems $(E,V)$ with $ k \geq r$ and assuming  that the rank of the restriction of $E$ to each component of the curve is the same. In this paper we consider also curves which are not of compact type and we deal with the case $k<r$.
\hfill\par
Let $(C,\w)$ be a polarized nodal curve with smooth irreducible components of genus at least $2$. As in the case of smooth or irreducible nodal curves, we prove that for any type $(r,d,k)$ with $k < r$ there are only finitely many distinct moduli spaces (see Lemma \ref{LEM:pcritici} and Proposition \ref{PROP:alphaL}) so there is a ``terminal'' moduli space which will denote by $\GwL(r,d,k)$. In order to describe this moduli space we generalize the notion of BGN extensions to nodal reducible curves (see Section \ref{SEC:BGN}). Proceeding as in the smooth case, more technicalities are involved as the curve is reducible, depending also on the chosen polarization and the behaviour of these moduli spaces seems very wild. 
The situation became a little bit better if one choses a polarization which is {\it good}. This class of polarizations was introduced by the authors in \cite{BFPol} by observing that depth one sheaves on nodal curves equipped with good polarizations reflect a lot of properties that hold for vector bundles on smooth curves. For curves of compact type, good polarizations are exactly those for which $\cO_C$ is $\w$-stable (in general, $\cO_C$ is not even $\w$-semistable!). The definition of good polarization on a nodal curve is rather technical so we refer to the preliminaries in Section \ref{SEC:Preliminaries}.
\vspace{2mm}

In Section \ref{SEC:3}, by fixing a good polarization $\w$ on a nodal curve $C$  we are able to  describe (in analogy to the smooth case) coherent systems of the moduli space $\GwL(r,d,k)$ as BGN extensions of  depth one sheaves  which  are  $\w$-semistable (see Theorem \ref{THM:BGNext} and Theorem \ref{THM:cs}).
\vspace{2mm}

In the last section (i.e. Section \ref{SEC:4}) we analyze in details the moduli space
$\GwL(r\cdot \underline{1},d,k)$ parametrizing coherent systems 
$(E,V)$ where the restriction of $E$ to any component has rank $r$. We are interested in these moduli spaces as they contains coherent systems arising from locally free sheaves. 
For any good polarization $\w$ on a nodal curve,
we give a necessary and sufficient condition for emptyness of $\GwL(r\cdot \underline{1},d,k)$. Moreover, we give a description of  all   irreducible components  containing  coherent systems $(E,V)$ with $E$ locally free, generalizing the picture of the situation in the case of smooth curves. 
Such components are birational to Grassmannian fibrations over irreducible components of the moduli space of $\w$-semistable depth one sheaves whose $\w$-rank and $\w$-degree are $(r-k)$ and $d$ respectively (see Theorem \ref{THM:main}). 
Finally, we investigate how coherent systems restrict to  irreducible components of the curve $C$ (see Corollary \ref{COR:restriction}).

\section{Technical results and preliminaries}
\label{SEC:Preliminaries}

In this section we recall some definitions and  we state relevant technical results about nodal curves,  depth one sheaves  on them and coherent systems.

\subsection{Nodal curves}
\label{SUBSEC:nodalcurves}
Let $C$ be a connected reduced nodal curve over the complex field (i.e. having only ordinary double points as singularities).  We will denote by $\gamma$ the number of  irreducible components and $\delta$ the number of nodes  of $C$. We will assume that each irreducible component $C_i$ is a smooth curve of genus $g_i \geq 2$. For the  theory of nodal curves see \cite[Ch X]{ACG}. 
We will denote by 
$$ \nu \colon  C^{\nu}= {\bigsqcup}_{i=1}^{\gamma}C_i \to C$$
the normalization map. 
From the exact sequence:
$$ 0  \to \OO_C \to \nu_*\nu^*(\OO_C) \to \bigoplus_{p\in Sing(C)}{\mathbb C_p} \to 0, $$
we deduce that 
$\chi(\OO_C) = \sum_{i=1}^{\gamma}\chi(\OO_{C_i}) -\delta$,
and we obtain the {\it arithmetic  genus}  of $C$: 
\begin{equation}
\label{EQ:paC}
p_a(C)=\sum_{i=1}^{\gamma}g_i+\delta-\gamma+1.
\end{equation}

\noindent For any irreducible component $C_i$ let   $C_i^c$  be the closure of $C\setminus C_i$. We set $\Delta_i=C_i\cdot C_i^c$, and we denote by  $\delta_i$ its  degree, i.e. the number of nodes of $C$ on $C_i$. We recall that $C$ can be  embedded in a smooth projective surface $S$ (see \cite{A79}) and this yields the exact sequence
\begin{equation}
\label{EXSEQ:CB}
0\to \OO_{C_i^c}(-\Delta_i) \to \OO_C \to \OO_{C^i} \to 0.
\end{equation}
For more details on this sequence see \cite{BFVec}. 
Since $C$ is  a local complete intersection,  Serre duality Theorem holds. There exists on $C$ a dualizing sheaf $\omega_C$, it is an invertible sheaf as $C$ is Gorestein.  We recall that, for any $i= 1 \dots \gamma$, we have  $\omega_C|_{C_i}=\omega_{C_i}(\Delta_i)$.
\hfill\par 
\hfill\par
Finally, we recall some technical  results.
Let $p$ be a node and denote by $C_{i_1}$ and $C_{i_2}$ the two components such that $p\in C_{i_1}\cap C_{i_2}$. Following the notations of \cite{Ses}, chap. 8,  we set:
$$\OO_{x_{i_k}}= \OO_{C_{i_k},p}, \quad m_{x_{i_k}} = m_{C_{i_k},p}, \quad \OO_p= \OO_{C,p} \quad m_{p} = m_{C,p}.$$
Then:
$$\OO_p = \{ (f,g) \in \OO_{x_{i_1}} \oplus \OO_{x_{i_2}} \,\vert\,  f(p) = g(p) \}, \quad m_p = m_{x_{i_1}}
\oplus m_{x_{i_2}}.$$
The  
 isomorphisms $\OO_{x_{i_k}} \simeq m_{x_{i_k}}$ 
obtained by sending $f \mapsto ft_{i_k}$, where $t_{i_k}$ is a local coordinate on $C_{i_k}$ at $p$, induce an isomorphism 
$\OO_{x_{i_1}} \oplus \OO_{x_{i_2}} \simeq m_p$. 
We have the  following exact sequences of $\OO_p$-moduli:
\begin{equation}
\label{eq1}
0 \to \OO_p \to \OO_{x_{i_1}} \oplus \OO_{x_{i_2}} \to {\mathbb C} \to 0\qquad \mbox{ and }\qquad 0 \to m_p \to \OO_p \to {\mathbb C} \to 0.
\end{equation}

\begin{lemma} 
\label{homloc}
In the above hypothesis we have:
\hfill\par
\begin{enumerate} [(a)]
\item{} 
$\Hom_{\OO_p}(\OO_{x_{i_k}},\OO_{x_{i_j}})=\begin{cases} 0 & \mbox{if } k\neq j\\
\OO_{x_{i_j}} & \mbox{if } k=j
\end{cases}$, $ \Hom_{\OO_p}(\OO_{x_{i_k}},\OO_p) \simeq t_{i_k}\cdot  \OO_{x_{i_k}} \simeq m_{x_{i_k}}$;
\item{} $\Hom_{\OO_p}({\mathbb C},\OO_p)=\Hom_{\OO_p}({\mathbb C},m_p)=\Hom_{\OO_p}({\mathbb C},\OO_{x_{i_k}})=0$;
\item{} $\Ext_{\OO_p}^1(\bC, \OO_p)\simeq \bC$,
$\Ext_{\OO_p}^1(m_P,\cO_p) = 
\Ext_{\OO_p}^1(\OO_{x_{i_k}},\cO_p) =0$.
\end{enumerate}
\end{lemma}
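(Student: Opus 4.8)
The plan is to reduce every assertion to an explicit computation inside the fibre product $\OO_p=\{(f,g)\in\OO_{x_{i_1}}\oplus\OO_{x_{i_2}}:f(p)=g(p)\}$, using that $\OO_{x_{i_1}},\OO_{x_{i_2}}$ are discrete valuation rings with uniformisers $t_{i_1},t_{i_2}$. Everything is governed by the two elements $\xi_1=(t_{i_1},0)$ and $\xi_2=(0,t_{i_2})$ of $\OO_p$. First I would record the structural facts that do the work: $\xi_1\xi_2=0$; the projection $\OO_p\to\OO_{x_{i_k}}$ is surjective, so $\OO_{x_{i_1}}\simeq\OO_p/\mathfrak a_1$ with $\mathfrak a_1=0\oplus m_{x_{i_2}}=\xi_2\OO_p$ and symmetrically $\OO_{x_{i_2}}\simeq\OO_p/\mathfrak a_2$, $\mathfrak a_2=m_{x_{i_1}}\oplus 0=\xi_1\OO_p$; the principal ideals satisfy $\xi_1\OO_p=m_{x_{i_1}}\oplus0$ and $\operatorname{Ann}_{\OO_p}(\xi_1)=0\oplus m_{x_{i_2}}=\xi_2\OO_p$ (and symmetrically). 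I would also note the $\OO_p$-module decomposition $m_p=(m_{x_{i_1}}\oplus0)\oplus(0\oplus m_{x_{i_2}})$, each summand being isomorphic to $\OO_{x_{i_k}}$ via $r\mapsto(t_{i_k}r,0)$ resp.\ $r\mapsto(0,t_{i_k}r)$, so that $m_p\simeq\OO_{x_{i_1}}\oplus\OO_{x_{i_2}}$ as $\OO_p$-modules.

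With these in hand, (a) and (b) are annihilator computations. From the cyclic presentation $\OO_{x_{i_k}}=\OO_p/\mathfrak a_k$, any homomorphism out of $\OO_{x_{i_k}}$ is determined by the image of the generator, which must be killed by $\mathfrak a_k$; hence $\Hom_{\OO_p}(\OO_{x_{i_k}},M)=\operatorname{Ann}_M(\mathfrak a_k)$ for every $M$. Since $\mathfrak a_k$ acts on the opposite branch through multiplication by a uniformiser while acting as zero on $\OO_{x_{i_k}}$ itself, and each $\OO_{x_{i_j}}$ is a domain, these annihilators give $0$ off the diagonal and $\OO_{x_{i_k}}$ on the diagonal, while $\operatorname{Ann}_{\OO_p}(\mathfrak a_1)=m_{x_{i_1}}\oplus0\simeq t_{i_1}\OO_{x_{i_1}}\simeq m_{x_{i_1}}$; this is (a). For (b), the analogous identity $\Hom_{\OO_p}(\bC,M)=\operatorname{Ann}_M(m_p)$ together with the fact that $\xi_1,\xi_2\in m_p$ act injectively on $\OO_p$, on $m_p$ and on each $\OO_{x_{i_k}}$ (the branches being domains) forces every such annihilator to vanish.

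For (c) I would exploit the $2$-periodic (matrix-factorisation) resolutions forced by the relations above, reflecting that $\OO_p$ is a hypersurface singularity. From $\OO_{x_{i_1}}=\OO_p/\xi_2\OO_p$ with $\operatorname{Ann}(\xi_2)=\xi_1\OO_p$ and $\operatorname{Ann}(\xi_1)=\xi_2\OO_p$ one obtains the resolution $\cdots\xrightarrow{\xi_2}\OO_p\xrightarrow{\xi_1}\OO_p\xrightarrow{\xi_2}\OO_p\to\OO_{x_{i_1}}\to0$; applying $\Hom_{\OO_p}(-,\OO_p)$ and taking cohomology at the first spot, kernel and image both equal $\xi_2\OO_p$, so $\Ext^1_{\OO_p}(\OO_{x_{i_k}},\OO_p)=0$. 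Since $m_p\simeq\OO_{x_{i_1}}\oplus\OO_{x_{i_2}}$, this gives $\Ext^1_{\OO_p}(m_p,\OO_p)=0$ at once. Finally, for $\bC=\OO_p/(\xi_1,\xi_2)$, the first syzygies of $(\xi_1,\xi_2)$ are generated by $(\xi_2,0)$ and $(0,\xi_1)$ (each encoding $\xi_1\xi_2=0$), giving a diagonal syzygy matrix $\operatorname{diag}(\xi_2,\xi_1)$; dualising and computing $\ker/\operatorname{im}$ at the first step, the class is detected by the difference of the leading coefficients of a homomorphism along the two branches, and the cokernel is exactly one copy of $\bC$, whence $\Ext^1_{\OO_p}(\bC,\OO_p)\simeq\bC$. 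As a cross-check, the long exact $\Hom_{\OO_p}(-,\OO_p)$-sequence of $0\to m_p\to\OO_p\to\bC\to0$ identifies $\Ext^1_{\OO_p}(\bC,\OO_p)$ with $\Hom_{\OO_p}(m_p,\OO_p)/\OO_p$ and $\Ext^1_{\OO_p}(m_p,\OO_p)$ with $\Ext^2_{\OO_p}(\bC,\OO_p)$, both recovering the stated values.

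The main obstacle is bookkeeping rather than conceptual: one must correctly pin down the annihilator ideals and the syzygies inside the fibre product $\OO_p$, and carry out the single genuine computation — the cokernel giving $\Ext^1_{\OO_p}(\bC,\OO_p)\simeq\bC$. Once the identifications $\OO_{x_{i_k}}\simeq\OO_p/\xi_j\OO_p$ (for $j\neq k$) and $m_p\simeq\OO_{x_{i_1}}\oplus\OO_{x_{i_2}}$ are established, all the remaining steps are immediate.
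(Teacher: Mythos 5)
Your proof is correct, but it takes a genuinely different route from the paper's. The paper's proof is two lines: part (a) is quoted from \cite{Ses}*{Lemma 6, p.~171}, and (b), (c) are then obtained formally by applying $\Hom_{\OO_p}(-,\OO_C)$-type functors, i.e.\ $\Hom_{\OO_p}(-,\OO_p)$, to the two exact sequences in \eqref{eq1}, using (a) to identify the terms. You instead work entirely inside the fibre product: the identifications $\OO_{x_{i_k}}\simeq \OO_p/\xi_j\OO_p$ (for $j\neq k$), $\operatorname{Ann}_{\OO_p}(\xi_k)=\xi_j\OO_p$, and $m_p\simeq \OO_{x_{i_1}}\oplus\OO_{x_{i_2}}$ reduce (a) and (b) to annihilator computations, and the resulting $2$-periodic free resolutions (matrix factorisations of the nodal hypersurface) compute the Ext groups in (c) directly, including the explicit syzygy presentation of $\bC$ by $\operatorname{diag}(\xi_2,\xi_1)$. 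I verified the key computations: $\xi_1\OO_p=m_{x_{i_1}}\oplus 0$, the exactness of $\cdots\xrightarrow{\xi_1}\OO_p\xrightarrow{\xi_2}\OO_p\to\OO_{x_{i_1}}\to 0$, the syzygy claim for $(\xi_1,\xi_2)$, and the cokernel computation detecting $\Ext^1_{\OO_p}(\bC,\OO_p)\simeq\bC$ via the difference of leading coefficients along the two branches; all are right. What your route buys is self-containedness (no appeal to Seshadri) and more information: the periodic resolutions give all higher Ext groups at once, consistent with $\OO_p$ being Gorenstein of dimension one. What the paper's route buys is brevity: granting (a), everything in (b) and (c) falls out of two long exact sequences — which is exactly your final ``cross-check,'' so in effect your last paragraph reproduces the paper's entire argument as a consistency test.

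One phrasing slip in your (b): $\xi_1$ and $\xi_2$ do \emph{not} act injectively on $\OO_p$ or on $m_p$ individually — you yourself computed $\operatorname{Ann}_{\OO_p}(\xi_1)=\xi_2\OO_p\neq 0$. What your argument actually needs, and what is true, is that the joint kernel is trivial: $\operatorname{Ann}_{\OO_p}(\xi_1)\cap\operatorname{Ann}_{\OO_p}(\xi_2)=(0\oplus m_{x_{i_2}})\cap(m_{x_{i_1}}\oplus 0)=0$, and likewise on $m_p$, while on $\OO_{x_{i_k}}$ the single element $\xi_k$ already acts injectively because the branch is a domain. With that restatement, $\operatorname{Ann}_M(m_p)=0$ for each of the three modules $M$ in question and (b) stands; this is a local repair of wording, not a gap in the method.
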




\begin{proof}
For $(a)$ see \cite[Lemma 6, pag. 171]{Ses}. $(b)$ and $(c)$ are obtained by applying $\Hom_{\cO_p}(-,\cO_p)$ to the exact sequence \ref{eq1}.
\end{proof}

\begin{definition}
\label{DEF:polarization}
Let $C$ be a nodal curve with $\gamma$  irreducible components. 
A {\bf polarization} on  $C$ is a vector  ${\underline w}= (w_1,\dots,w_{\gamma}) \in {\mathbb Q}^{\gamma}$ such that
\begin{equation}
0 < w_i < 1 \quad \sum_{i=1}^{\gamma}w_i = 1.
\end{equation}
We will say that the pair $(C,\w)$ is a 
{\bf polarized curve}. 
\end{definition}

\subsection{Depth one sheaves on nodal curves}
\label{Subsec:depthone}
We recall the notion of depth one sheaves on nodal curves (see \cite{Ses} for details).

A coherent sheaf $E$ on a reduced curve  is said to be of {\it depth one} if  for any $x \in \Supp(E)$ the stalk $E_x$ is an $\OO_x$-module of depth one. For a nodal curve this is equivalent to say that $E$ is {\it pure of dimension $1$}, i.e.   $\dim F = \dim \Supp(F) = 1$ for every subsheaf $F$ of $E$.  Let $C$ be a nodal curve with smooth irreducible components $C_i$.  Using the notations introduced above, a coherent sheaf $E$ on $C$ is of depth one if the stalk of $E$ at the node $p \in C_{i_1}\cap C_{i_2}$ is isomorphic to $\OO_p^s \oplus \OO_{x_{i_1}}^{a_1} \oplus \OO_{x_{i_2}}^{a_2}$. In particular, any  vector bundle  on $C$ is a sheaf of depth one and any subsheaf of a  depth one  sheaf is of depth one too.
\hfill\par

Let $E$  be a depth one sheaf on  $C$. Its dual sheaf    $E^* =  \cHom_{\OO_C}(E,\OO_C)$  is  of depth one too  and $E$ is reflexive, i.e. $\cHom_{\OO_C}(E^*,\OO_C) \simeq E$.   
In particular, Serre duality   yields   for any $q \geq 0$   the isomorphism  $H^q(E)^* \simeq H^{1-q}(E^* \otimes {\omega}_C)$.


\begin{lemma}
\label{LEM:EXT1}
Let $E$ be a depth one sheaf on $C$,  then $\cExt^1(E,\cO_C)=0$.
\end{lemma}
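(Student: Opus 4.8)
The plan is to argue locally, exploiting the fact that $\cExt^1$ can be computed stalkwise. Since $E$ is coherent and $C$ is locally Noetherian, $E$ admits locally a finite free resolution, and so there is a natural isomorphism $\cExt^1_{\OO_C}(E,\OO_C)_x \simeq \Ext^1_{\OO_{C,x}}(E_x,\OO_{C,x})$ for every $x \in C$. Hence it suffices to prove that these local $\Ext^1$ modules vanish at every point of $C$, and then conclude that the sheaf itself is zero because all of its stalks are.

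At a smooth point $x$ of $C$ there is nothing to do: a depth one sheaf is locally free at smooth points, so $E_x$ is a free $\OO_{C,x}$-module and $\Ext^1_{\OO_{C,x}}(E_x,\OO_{C,x})=0$.

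The only remaining case is that of a node $p \in C_{i_1}\cap C_{i_2}$, and here I would use the explicit description of the stalk of a depth one sheaf recalled in Subsection \ref{Subsec:depthone}, namely $E_p \simeq \OO_p^{\,s}\oplus \OO_{x_{i_1}}^{a_1}\oplus \OO_{x_{i_2}}^{a_2}$. Since $\Ext^1_{\OO_p}(-,\OO_p)$ is additive in its first argument, it is enough to treat the three summands separately. The free summand contributes $\Ext^1_{\OO_p}(\OO_p,\OO_p)=0$, while Lemma \ref{homloc}$(c)$ provides exactly the vanishing $\Ext^1_{\OO_p}(\OO_{x_{i_k}},\OO_p)=0$ for $k=1,2$. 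Therefore $\Ext^1_{\OO_p}(E_p,\OO_p)=0$.

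Combining the two cases, every stalk of $\cExt^1(E,\OO_C)$ vanishes, so the sheaf is zero. I do not expect any genuine obstacle here: the substantial computational input, namely the vanishing of the local $\Ext^1$ of the summands $\OO_{x_{i_k}}$ into $\OO_p$, is already supplied by Lemma \ref{homloc}. The only point deserving a word of justification is the stalkwise identification of $\cExt^1$ with the module-level $\Ext^1$, which is legitimate precisely because $E$ is coherent on the locally Noetherian curve $C$.
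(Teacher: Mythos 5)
Your proof is correct and essentially identical to the paper's: both check stalks, dispose of smooth points by local freeness, and at a node $p$ use the decomposition $E_p \simeq \OO_p^{s}\oplus \OO_{x_{i_1}}^{a_1}\oplus \OO_{x_{i_2}}^{a_2}$ together with Lemma \ref{homloc}(c) and additivity of $\Ext^1_{\OO_p}(-,\OO_p)$. The only difference is that you make explicit the coherence argument identifying $\cExt^1(E,\OO_C)_p$ with $\Ext^1_{\OO_p}(E_p,\OO_p)$, which the paper leaves implicit.
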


\begin{proof}
 If $p \in C$ is  a smooth point,  then  $E_p \simeq \OO_p^r$ so  we have $\Ext^1(E_p,\cO_p) = 0$. If $p  \in C_{i_1} \cap C_{i_2}$,  then $ E_p \simeq \OO_{p}^{s} \oplus \OO_{x_{i_1}}^{a_1} \oplus \OO_{x_{i_2}}^{a_2}$. 
By Lemma \ref{homloc}(c) we have $\Ext_{\OO_p}^1(\OO_{x_{i_k}}, \OO_p) = \Ext_{\OO_p}^1(\OO_p, \OO_p) =0$ so $\Ext_{\OO_p}^1(E_p,\OO_p) = 0$.
\end{proof}

Let $(C,\w)$  be a  polarized nodal curve with $\gamma$ smooth  irreducible components $C_i$. 
Let $E$ be a sheaf of depth one on $C$,   we define the  {\it restriction  of $E$ modulo torsion}  on the component $C_i$ as
$ E_i = E \otimes \OO_{C_i} /Torsion$.  We set $r_i= \rank(E_i)$. 
We  define the {\it multirank} of $E$, the ${\underline w}$-{\it rank} of $E$ and the ${\underline w}$-{\it degree} of $E$ respectively as 
\begin{equation}
\label{multirank}
\mrank(E) = (r_1,\dots,r_\gamma)\qquad \wrank (E) = \sum_{i = 1}^{\gamma}w_i r_i\qquad \wdeg E = \chi(E)-\wrank(E) \chi(\OO_C).
\end{equation}
 
For brevity, we will denote by $\underline{1}$ the vector $(1,\dots,1)\in \Z^{\gamma}$. 
Note that ${\underline w}$-rank and ${\underline w}$-degree are not  necessary integers. When $E$ is a vector bundle on $C$, i.e. it is locally isomorphic to $\OO_C^r$, then $\rank(E_i) = r$ $\forall i$, the ${\underline w}$-rank of $E$ is actually $r$ and its multirank is $r\cdot \underline{1}$. Moreover the ${\underline w}$-degree of $E$ is the sum of the degrees of $E_i$ and, in particular, it is an integer too. In general, this is not the case. In \cite {BFPol} the authors have introduced and studied the function $\Delta_{\w}(E)=\wdeg(E)-\sum_{i=1}^{\gamma} \deg(E_i)$.

\begin{lemma}
\label{LEM:wdegree}
Let $(C,\w)$ be a polarized nodal curve.  Let $E$ be a depth one sheaf on $C$,  let $E_i$ its restriction modulo torsion to $C_i$ with $\rank(E_i) = r_i$ and $\deg(E_i) = d_i$. Then the following hold:
\begin{enumerate}[(a)] \item$\Delta_{\w}(E)=\sum_{i=1}^{\gamma}  r_i (1-g_i -w_i \chi(\OO_C))  - \sum_{j=1}^{\delta}s_{p_j}$, where
$s_{p_j}$ is the rank of the free part of the stalk  of $E$ at $p_j$;
\item  $\Delta_{\w}(E\otimes L)=\Delta_{\w}(E)$ for any $L$ line bundle on $C$;
\item if $r_i=r$ for any $i$, then $\Delta_{\w}(E) \geq 0$ and equality holds if and only  $E$ is locally free;
\item let $w_{m}=\min\{w_i\,:\, i=1,\dots,\gamma\}$ and $r_{M}=\max \{r_i\,:\, i=1,\dots,\gamma\}$, we have
$$\Delta_{\w}(E) \geq -(r_{M}-\wrank(E))(p_a(C)-1)\geq -\left(\frac{1}{w_{m}}-1\right)\wrank(E)(p_a(C)-1).
$$
\end{enumerate}
\end{lemma}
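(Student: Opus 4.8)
The plan is to reduce all four parts to item (a), whose proof rests on a single exact sequence comparing $E$ with the direct sum of the pushforwards of its restrictions $E_i$. Write $\nu_i\colon C_i\to C$ for the inclusions. At a node $p\in C_{i_1}\cap C_{i_2}$ the depth one hypothesis gives $E_p\simeq \OO_p^{s_p}\oplus \OO_{x_{i_1}}^{a_1}\oplus \OO_{x_{i_2}}^{a_2}$, and restricting modulo torsion to $C_{i_1}$ sends $\OO_p\mapsto \OO_{x_{i_1}}$ while killing $\OO_{x_{i_2}}$ (and symmetrically for $C_{i_2}$), so that near $p$ one has $r_{i_1}=s_p+a_1$ and $r_{i_2}=s_p+a_2$; in particular $s_p\le\min(r_{i_1},r_{i_2})$. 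The natural evaluation map $E\to\bigoplus_i\nu_{i*}E_i$ is injective, since its kernel would be a torsion subsheaf of the depth one sheaf $E$, and by the first sequence in \eqref{eq1} applied to the $\OO_p^{s_p}$ summand its cokernel is supported on the nodes with stalk $\C^{s_p}$ at $p$. This gives
\begin{equation*}
0\to E\to \bigoplus_{i=1}^{\gamma}\nu_{i*}E_i\to \bigoplus_{j=1}^{\delta}\C_{p_j}^{s_{p_j}}\to 0.
\end{equation*}

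Taking Euler characteristics and applying Riemann--Roch on each smooth component $C_i$ yields $\chi(E)=\sum_i\big(d_i+r_i(1-g_i)\big)-\sum_j s_{p_j}$. Substituting this into $\Delta_{\w}(E)=\chi(E)-\wrank(E)\chi(\OO_C)-\sum_i d_i$ and using $\wrank(E)=\sum_i w_ir_i$, the $d_i$ cancel and (a) drops out. Part (b) is then immediate: tensoring by a line bundle $L$ leaves $\mrank(E)$, hence $\wrank(E)$, unchanged, does not alter the free ranks $s_{p_j}$ (tensoring by $L$ is a local isomorphism), and the right-hand side of (a) depends only on these data together with the $g_i$, $w_i$ and $\chi(\OO_C)$; so $\Delta_{\w}(E\otimes L)=\Delta_{\w}(E)$.

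For (c) I would set $r_i=r$ in (a). Since $\sum_i w_i=1$ and $\chi(\OO_C)=\gamma-\sum_i g_i-\delta$, one gets $r\sum_i(1-g_i-w_i\chi(\OO_C))=r\big(\gamma-\sum_i g_i-\chi(\OO_C)\big)=r\delta$, hence $\Delta_{\w}(E)=\sum_{j=1}^{\delta}(r-s_{p_j})\ge 0$ because $s_{p_j}\le\min(r_{i_1},r_{i_2})=r$; equality forces $s_{p_j}=r$ at every node, i.e. $E_p\simeq\OO_p^r$ everywhere, which is exactly local freeness. For (d), I rewrite the first target using $p_a(C)-1=-\chi(\OO_C)$; after substituting (a) and $\chi(\OO_C)=\sum_i(1-g_i)-\delta$, the inequality $\Delta_{\w}(E)\ge(r_M-\wrank(E))\chi(\OO_C)$ becomes
\begin{equation*}
\sum_{i=1}^{\gamma}(r_i-r_M)(1-g_i)+\Big(r_M\delta-\sum_{j=1}^{\delta}s_{p_j}\Big)\ge 0,
\end{equation*}
where each $(r_i-r_M)(1-g_i)\ge 0$ since $r_i\le r_M$ and $g_i\ge 2$, and $r_M\delta-\sum_j s_{p_j}\ge 0$ since $s_{p_j}\le r_M$ at each of the $\delta$ nodes. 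For the second inequality, $p_a(C)-1>0$ (indeed $p_a(C)\ge 2$ as $g_i\ge 2$), so after dividing it reduces to $w_m r_M\le\wrank(E)$, which holds because $\wrank(E)=\sum_i w_ir_i\ge w_{i_0}r_{i_0}\ge w_m r_M$ for an index $i_0$ realizing $r_M$.

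I expect the only genuine difficulty to lie in (a), namely pinning down the cokernel of $E\to\bigoplus_i\nu_{i*}E_i$: verifying the local identification $r_{i_k}=s_p+a_k$ and that the cokernel stalk is exactly $\C^{s_p}$, which is where Lemma \ref{homloc}, the depth one structure of stalks and sequence \eqref{eq1} all enter. Once the exact sequence is established, parts (b)--(d) are elementary bookkeeping with the polarization identities $\sum_i w_i=1$ and $\chi(\OO_C)=\gamma-\sum_i g_i-\delta$ together with the bound $g_i\ge 2$.
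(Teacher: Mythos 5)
Your proof is correct, and it is worth noting how it sits relative to the paper: the paper's own proof of this lemma consists of a citation of \cite{BFPol}*{Lemma 2.4} for parts (a)--(c), plus a direct derivation of (d) from (a). So for (a)--(c) you have supplied the self-contained argument that the paper outsources, and your route is the standard one: the normalization-comparison sequence $0 \to E \to \bigoplus_{i}\nu_{i*}E_i \to \bigoplus_{j}\C_{p_j}^{s_{p_j}} \to 0$, justified stalkwise at the nodes via the decomposition $E_p \simeq \OO_p^{s}\oplus\OO_{x_{i_1}}^{a_1}\oplus\OO_{x_{i_2}}^{a_2}$, the identification $r_{i_k}=s_p+a_k$, and the first sequence in \eqref{eq1}; Riemann--Roch on each smooth component then gives $\chi(E)$, and the $d_i$ cancel exactly as you say. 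For part (d) your argument coincides with the paper's: both rewrite (a) as $\Delta_{\w}(E)=-\sum_i r_i(g_i-1)+\wrank(E)(p_a(C)-1)-\sum_j s_{p_j}$ (you phrase it via $\chi(\OO_C)=\sum_i(1-g_i)-\delta$, the paper via $p_a(C)-1=\sum_i(g_i-1)+\delta$, which is the same bookkeeping), bound $r_i\le r_M$ and $s_{p_j}\le r_M$ using $g_i\ge 2$, and reduce the second inequality to $w_m r_M\le\wrank(E)$. Your derivations of (b) from the fact that formula (a) depends only on the $r_i$, the local stalk types, and the fixed invariants, and of (c) via $\Delta_{\w}(E)=\sum_{j=1}^{\delta}(r-s_{p_j})$, are sound; the one point you could make fully explicit in (c) is that a depth one stalk at a smooth point is automatically free (torsion-free over a DVR), so that $s_{p_j}=r$ at every node is genuinely equivalent to local freeness of $E$ --- but this is a completeness remark, not a gap. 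The net effect of your version is a proof of the whole lemma that does not depend on the external reference.
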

\begin{proof}
Statements $(a),(b)$ and $(c)$ have been  proved in \cite{BFPol}*{Lemma 2.4}. 
From $(a)$ we get
$$\Delta_{\w}(E)=- \sum_{i=1}^{\gamma}r_i(g_i-1)  + \sum_{i=1}^{\gamma}w_ir_i(p_a(C)-1)  -\sum_{j=1}^\delta s_{p_j},$$
since  $s_{p_j} \leq r_{M}$, $r_i \leq r_{M}$  and $g_i \geq 2$, we obtain
$$\Delta_{\w}(E) \geq - r_{M} \sum_{i=1}^{\gamma}(g_i-1)  + \wrank(E)(p_a(C)-1)   -\delta r_{M} \geq (\wrank(E)-r_{M})\left(p_a(C)-1\right).$$
For all $j$ we have $r_j\leq \wrank(E)/{w_{m}}$, so $r_{M}\leq \wrank(E)/{w_{m}}$ and this proves $(d)$.
\end{proof}

We recall the definition of good polarization (see \cite{BFPol}).

\begin{definition}
\label{GOOD}
A polarization $\w$ on $C$ is said {\bf good} if $\Delta_{\w}(E) \geq 0$ for all depth one sheaves $E$ on $C$ and equality holds if and only if $E$ is locally free.
\end{definition}

In \cite{BFPol} the authors proved that good polarizations exist on any stable nodal curve $C$ with $p_a(C) \geq 2$. If $\w$ is good, then $\cO_C$ is $\w$-stable and the converse holds when $C$ is a nodal curve of compact type (see \cite[Theorem 3.9]{BFPol}). It is also conjectured that this should hold for any nodal curve. The above lemma justifies the following definition:
\begin{definition}
\label{DEF:lambdaw}
Let $(C,\w)$  be a polarized nodal curve. We set 
$$\lambda_{\w}=\begin{cases}
0 & \mbox{ if } \w \mbox{ is good}\\
\left(\frac{1}{w_{m}}-1\right)(p_a(C)-1) & \mbox{otherwise}.
\end{cases}$$
\end{definition}

Notice that $\lambda_{\w}\geq0$ for all $\w$ and, by Lemma \ref{LEM:wdegree}, if $E$ is a depth one sheaf on $C$ we have $\Delta_{\w}(E)\geq-\lambda_{\w}\wrank(E)$.
\vspace{2mm}

We recall the notion of $\w$-semistability for depth one sheaves on a polarized nodal curve.  

\begin{definition}
A depth one sheaf $E$ is  said $\w$-(semi)stable if for any proper subsheaf $F$ of $E$ we have $\mu_{\w}(F)< \mu_{\w}(E)$ (resp. $\leq$), where $\mu_{\w}(E)=\wdeg(E)/\wrank(E)$ is said $\w$-slope of $E$.
\end{definition}

For any $d\in \Q$ and $r\in \Q_{>0}$, there exists a coarse moduli space $\mathcal{U}_{(C,\w)}(r,d)$ for   families  of $\w$-semistable depth one sheaves on $C$ with prescribed $\w$-rank and $\w$-degree. It is a projective variety.  Moreover, when we fix $(r_1,\dots,r_{\gamma})\in \N^\gamma$, we obtain the moduli space $\mathcal{U}_{(C,\w)}((r_1,\dots,r_{\gamma}),d)$ of classes of $w$-semistable depth one sheaves with prescribed multirank. When we  will consider the subscheme parametrizing  $\w$-stable classes we will use the notation $\mathcal{U}^s_{(C,\w)}(r,d)$ and $\mathcal{U}^s_{(C,\w)}((r_1,\dots,r_{\gamma}),d)$. For details one can see \cite{Ses}. 
We will denote by $[F]$ the $S$-equivalence class of a $\w$-semistable sheaf in its moduli space. When $F$ is $\w$-stable, $[F]$ is the isomorphism class of $F$. In this case, we will denote it simply by $F$.
\vspace{2mm}

When $\w$ is a good polarization, we generalize to $\w$-semistable depth one sheaves a well known result which holds for semistable vector bundles on a smooth curve.

\begin{lemma}
\label{LEM:h0dual}
Let $(C,\w)$ be a polarized nodal curve with $\w$ good. Assume that $E$ is a $\w$-semistable depth-one sheaf with $\wdeg(E)=d>0$. Then $h^0(E^*)=0$. 
\end{lemma}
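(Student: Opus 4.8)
The plan is to turn the vanishing of $H^0(E^*)$ into a slope comparison against $\cO_C$, using that a good polarization makes $\cO_C$ into a $\w$-stable sheaf of slope $0$. First I would reinterpret sections of the dual as homomorphisms into the structure sheaf. Since $E^*=\cHom_{\cO_C}(E,\cO_C)$ and $E$ is reflexive, tensor--hom adjunction gives
$$H^0(E^*)=\Hom_{\cO_C}(\cO_C,E^*)\cong\Hom_{\cO_C}(E,\cO_C),$$
so a nonzero section of $E^*$ is exactly a nonzero morphism $\phi\colon E\to\cO_C$. I then argue by contradiction: assume $h^0(E^*)\neq0$ and fix such a $\phi\neq0$, and let $I=\Img(\phi)\subseteq\cO_C$. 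Being a nonzero subsheaf of the depth one sheaf $\cO_C$, $I$ is itself depth one with $\wrank(I)>0$, and it fits into $0\to K\to E\to I\to0$ with $K=\Ker\phi$; here all three sheaves are depth one and both $\wrank$ and $\wdeg$ are additive along the sequence.

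Next I would play the two stabilities against each other. On the one hand, $\w$-semistability of $E$ forces the quotient $I$ to have slope at least $\mu_{\w}(E)$: if $K=0$ then $I\cong E$ and this is clear, while if $K\neq0$ then $K$ is a nonzero proper subsheaf with $\wrank(K)>0$, so $\mu_{\w}(K)\leq\mu_{\w}(E)$ and the see-saw property yields $\mu_{\w}(I)\geq\mu_{\w}(E)$. Since $\mu_{\w}(E)=d/\wrank(E)>0$ by the hypothesis $d>0$, this gives $\wdeg(I)=\mu_{\w}(I)\,\wrank(I)>0$. On the other hand, because $\wrank(\cO_C)=\sum_i w_i=1$ we get $\wdeg(\cO_C)=\chi(\cO_C)-\chi(\cO_C)=0$, hence $\mu_{\w}(\cO_C)=0$; as $\w$ is good, $\cO_C$ is $\w$-stable, so every nonzero subsheaf $I\subseteq\cO_C$ satisfies $\mu_{\w}(I)\leq0$ and therefore $\wdeg(I)\leq0$. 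This contradicts $\wdeg(I)>0$, so no nonzero $\phi$ exists and $h^0(E^*)=0$.

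The routine points I am deliberately not expanding are the additivity of $\wrank$ and $\wdeg$ in the exact sequence and the see-saw inequality for slopes. The one genuinely structural input is the $\w$-stability of $\cO_C$ for a good polarization (stated earlier in the excerpt), which is precisely what fixes the sign of $\mu_{\w}$ on subsheaves of $\cO_C$ and lets the comparison close; this is where the goodness hypothesis really enters. I expect the main obstacle to be bookkeeping around the image sheaf $I$: one must check that $I$ is a legitimate test object on both sides, i.e. that it is genuinely depth one with $0<\wrank(I)\leq\wrank(E)$, so that $\mu_{\w}(I)$ is defined and usable both as a quotient slope of $E$ and as a subsheaf slope of $\cO_C$ (in particular ruling out the degenerate case $\wrank(K)=\wrank(E)$, which is excluded since $I\neq0$).
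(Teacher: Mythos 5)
Your proof is correct and follows essentially the same route as the paper: both interpret a nonzero section of $E^*$ as a nonzero morphism $\varphi\colon E\to\cO_C$, pass to its image, and derive a contradiction by comparing the quotient-slope bound from $\w$-semistability of $E$ with the subsheaf-slope bound from $\w$-stability of $\cO_C$ (the point where goodness of $\w$ enters). The only cosmetic difference is that the paper treats the cases $G=\cO_C$ and $\Ker\varphi=0$ separately and concludes $d\leq 0$ directly from $\mu_{\w}(E)\leq\mu_{\w}(G)<\mu_{\w}(\cO_C)=0$, whereas you handle all cases uniformly via the see-saw inequality and the sign of $\wdeg(I)$.
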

\begin{proof}
Assume that $h^0(E^*) \neq 0$. As $E^*=\cHom(E,\OO_C)$, we have
$\Hom(E,\OO_C)\neq0$ so there exists a non zero homomorphism $\varphi \colon E \to \OO_C$.
Denote by $G$ the image of $\varphi$, so we have
$$\xymatrix{
E \ar@{->>}[r]_{\psi}\ar@/^1pc/[rr]^{\varphi} & G \ar@{^{(}->}[r] & \cO_C.
}$$
We will show that this implies $d\leq 0$ which contradicts our  hypothesis. If $G=\OO_C$ or $\psi$ is an isomorphism, we conclude respectively by $\w$-semistability of $E$ and by $\w$-stability of $\cO_C$ (which holds since $\w$ is good). Otherwise, we have that $G$ is a depth one sheaf which is a proper subsheaf of $\OO_C$ and a proper quotient of $E$. Then we have
$$d/\wrank(E)=\mu_{\w}(E)\leq \mu_{\w}(G)<\mu_{\w}(\cO_C)=0$$
by $\w$-semistability of $E$ and $\w$-stability of $\cO_C$ respectively.
\end{proof}

We stress that the above result does not hold  when $\w$ is not good.

\subsection{Coherent systems on nodal curves}
\label{Subsec:cohsystem}

Let $(C, \w)$ be a polarized  nodal curve. We will recall the notion of coherent systems on  the curve $C$ (see \cite{KN} for details).

\begin{definition}
A {\it coherent system}  on the curve $C$ is  given by a pair $(E,V)$, where $E$ is a depth one sheaf on $C$ and $V$ is a subspace of $H^0(E)$. \end{definition} 
A {\it coherent subsystem} $(F,U)$ of $(E,V)$ is a coherent system which consists of a subsheaf $F \subseteq E$ and a subspace  $U\subseteq V \cap H^0(F)$. We say that $(F,U)$  is a {\it proper}  subsystem if $(F,U) \not= (0,0)$ and $(F,U) \not= (E,V)$. 
A coherent system $(E,V)$ is said to be of {\it type} $(r,d,k)$ if $\wrank(E) = r$, $\wdeg E = d$ and $\dim V = k$; 
if the multirank of $E$ is $\mrank(E) =(r_1,\cdots,r_{\gamma})$ then it is said to be of {\it multitype} $((r_1,\cdots,r_{\gamma}),d,k)$.

Let  $(E,V)$ be a coherent system on $C$.  Let  $C_i$ be  a component of $C$, from the exact sequence \eqref{EXSEQ:CB}, we obtain that the restriction  map $E \to E \otimes \OO_{C_i}$ is surjective.
Then, the map $E\to E_i$ is also surjective and induces the map of global sections
$$ \rho_i \colon H^0(E)\to H^0(E_i).$$
We define $V_i$ as the  image of $V$ by the map  $\rho_i$. 
We will call $(E_i,V_i)$ the {\it restriction of $(E,V)$ to the curve $C_i$}. A coherent system $(E,V)$ is called {\it generated} if the evaluation map $\ev_V:V\otimes \OO_C\to E$ is surjective.

\begin{lemma}
\label{LEM:generatedsystem}
Let  $(E,V)$  be a generated  coherent system on $(C,\w)$ of type $(r,d,k)$. Then, 
\begin{enumerate}[(a)]
\item $r\leq k$, $(E_i,V_i)$ is generated and $\deg(E_i)\geq 0$;
\item if either $\w$ is good or $\Rk(E_i)=r$ for all $i=1,\dots,\gamma$, then   $d\geq 0$.
\end{enumerate}
\end{lemma}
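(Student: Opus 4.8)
The plan is to prove the two parts by exploiting the generation hypothesis, which gives a surjection $\ev_V \colon V \otimes \OO_C \to E$, and then reduce statements about $E$ to statements about the restrictions $E_i$ via the surjectivity of $E \to E_i$. For part $(a)$, I would first argue $r \leq k$ as follows. Restricting to a generic smooth point $x \in C_i$ of an arbitrary component, the evaluation map gives a surjection $V \to E_x \cong \C^{r_i}$ of the fibre, so $k = \dim V \geq r_i$ for every $i$. Since $\wrank(E) = \sum_i w_i r_i$ with $\sum_i w_i = 1$ and $r_i \leq k$, I conclude $r = \wrank(E) \leq k$. Next, the surjectivity of the restriction $\ev_V$ to $C_i$ shows that $(E_i, V_i)$ is generated: the evaluation map $V_i \otimes \OO_{C_i} \to E_i$ factors the surjection, and one checks it is surjective because global generation is preserved by the surjective restriction $E \to E_i$. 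Finally, a generated sheaf $E_i$ on the smooth curve $C_i$ is globally generated, hence $\deg(E_i) \geq 0$; the cleanest way is to observe that a globally generated vector bundle on a smooth projective curve has nonnegative degree (for instance, the determinant is globally generated, hence nef, hence of nonnegative degree).

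For part $(b)$, the goal is to pass from the componentwise bound $\deg(E_i) \geq 0$ to the global statement $d = \wdeg(E) \geq 0$. The key identity is the definition of $\Delta_{\w}$, namely $\wdeg(E) = \sum_{i=1}^{\gamma} \deg(E_i) + \Delta_{\w}(E)$. Since each $\deg(E_i) \geq 0$ by part $(a)$, it suffices to show $\Delta_{\w}(E) \geq 0$ under either hypothesis. If $\w$ is good, this is immediate from Definition \ref{GOOD}, which gives $\Delta_{\w}(E) \geq 0$ for all depth one sheaves. If instead $\Rk(E_i) = r$ for all $i$ (so the multirank is $r \cdot \underline{1}$), then Lemma \ref{LEM:wdegree}(c) applies and again yields $\Delta_{\w}(E) \geq 0$. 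In either case $d = \sum_i \deg(E_i) + \Delta_{\w}(E) \geq 0$.

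I expect the main obstacle to be the careful verification that $(E_i, V_i)$ is generated and, relatedly, that $\deg(E_i) \geq 0$. The subtlety is that $E_i$ is the restriction \emph{modulo torsion}, so one must be sure that the induced evaluation map $V_i \otimes \OO_{C_i} \to E_i$ genuinely surjects onto the torsion-free quotient $E_i$ and not merely onto $E \otimes \OO_{C_i}$; since $E_i$ is a quotient of $E \otimes \OO_{C_i}$ and the latter is generated by the image of $V$, surjectivity onto $E_i$ follows, but this should be spelled out. The nonnegativity of $\deg(E_i)$ for a globally generated bundle on a smooth curve is standard, so the genuine content of the lemma lies in the bookkeeping that links the $r \leq k$ inequality (which holds fibrewise on each component) to the weighted rank, and in invoking the correct nonnegativity result for $\Delta_{\w}$ in each of the two cases of part $(b)$.
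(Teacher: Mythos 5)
Your proposal is correct and follows essentially the same route as the paper: surjectivity of $\ev_V$ gives $k\geq r$, the restrictions $(E_i,V_i)$ are generated (which the paper cites from an external lemma and you prove directly via right-exactness of $-\otimes\OO_{C_i}$ and the factorization through $V_i\otimes\OO_{C_i}$), hence $\deg(E_i)\geq 0$, and part $(b)$ follows from $\wdeg(E)=\sum_i\deg(E_i)+\Delta_{\w}(E)$ together with $\Delta_{\w}(E)\geq 0$, using Definition \ref{GOOD} in the good case and Lemma \ref{LEM:wdegree}(c) in the constant-multirank case. Your fibrewise argument for $r\leq k$ (surjection $V\to E_x\cong\C^{r_i}$ at a generic smooth point, then $r=\sum_i w_ir_i\leq k$ since $\sum_i w_i=1$) correctly fills in a step the paper leaves implicit.
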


\begin{proof}
Since $(E,V)$ is generated, the evaluation map $ev_V \colon V \otimes \OO_C  \to E $ is surjective. This implies  $k \geq r$. By \cite{BFCoh}*{Lemma 3.3}, each  restriction $(E_i,V_i)$ is generated too so $\deg(E_i) \geq 0$. $(b)$ follows easily from Lemma \ref{LEM:wdegree} or the definition of good polarization as $\deg(E_i)\geq 0$ for all $i$.
\end{proof}

\begin{definition} Let $(E,V)$ be a coherent system on the curve $C$. Fix $\alpha \in \R$ and a polarization ${\underline w}$ on $C$. We say that $(E,V)$ is $\wa$-(semi)stable if for any proper coherent subsystem $(F,U)$ we have
$$\mu_{{\underline w},\alpha} (F,U) < \mu_{{\underline w},\alpha} (E,V)  \ \  (resp. \leq)$$
where $\mu_{\wa}$ is the $\wa$-slope, which is defined as 
$$\mu_{{\underline w},\alpha} (E,V) = \frac{\wdeg(E)}{\wrank(E)} + \alpha \frac{k}{\wrank(E)}= \mu_{\w}(E) + \alpha \frac{k}{\wrank(E)}.$$
\end{definition}

Fix $(r,d,k)$ with $r,d\in\mathbb{R}$, $r>0$, $k\in \mathbb{N}$ and $\alpha \in {\mathbb R}$ positive. In \cite{KN} it is proved that there exists a projective scheme 
$\tilde{\mathcal G}_{(C,\w),\alpha}(r,d,k)$ which is a coarse moduli space for families of
$\wa$-semistable coherent systems of type
$(r,d,k)$ on the polarized curve $(C,\w)$. 
Moreover, the open subscheme $\Gwa(r,d,k) \subset \tilde{\mathcal G}_{(C,\w),\alpha}(r,d,k)$,  parametrizing $\wa$-stable pairs,  is a coarse moduli space for $\wa$-stable  coherent systems of type $(r,d,k)$. Finally, when we fix $(r_1,\dots,r_{\gamma}) \in {\mathbb N}^{\gamma}$ we obtain the moduli space  $\Gwa((r_1,\dots,r_{\gamma}),d,k)$ parametrizing $\wa$-stable  coherent systems $(E,V)$ of multitype $((r_1,\dots,r_{\gamma}),d,k)$. 
For brevity, if $(E,V)$ is a $\wa$-stable coherent system, we denote by $(E,V)$ also its isomorphism class in its moduli space.
\hfill\par

In the last section we will focus  on  the moduli spaces $\Gwa(r\cdot\underline{1},d,k)$. In particular, we will consider coherent systems $(E,V)$  where  $E$  is  locally free. For these points,  we can  extend  a local smoothness condition, which holds   when $C$ is  smooth or  nodal irreducible  (see \cite{BGMN} and  \cite{Bho}).
We can define the Brill-Noether number
$$\beta_C(r,d,k)= r^2(p_a(C)-1)+1-k(k-d+r(p_a(C)-1)).$$
For any  $(E,V)\in \Gwa(r\cdot \underline{1},d,k)$  with  $E$ locally free, we can define the Petri map
$$\mu_{(E,V)}:V\otimes H^0(\omega_C\otimes E^*)\to H^0(\omega_C\otimes E\otimes E^*),$$
which is given by multiplication by global sections.

\begin{proposition}
\label{PROP:Petri}
Let $(E,V)\in \Gwa(r\cdot \underline{1},d,k)$, assume that $E$ is locally free and that the evaluation map $\ev_V$  is injective. Then the Petri map $\mu_{(E,V)}$ is injective too and the moduli space
$\Gwa(r\cdot \underline{1},d,k)$
is a smooth at $(E,V)$ with dimension $\beta_C(r,d,k)$.
\end{proposition}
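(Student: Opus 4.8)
The plan is to adapt the tangent--obstruction theory of coherent systems, developed for smooth curves in \cite{BGMN} and for irreducible nodal curves in \cite{Bho}, to the present reducible nodal setting, using crucially that $E$ is locally free. To $(E,V)$ one attaches the two--term complex of sheaves on $C$
$$C^{\bullet}\colon\quad \cHom(E,E)\xrightarrow{\ \phi\,\mapsto\,\phi\circ\ev_V\ }\cHom(V\otimes\OO_C,E),$$
concentrated in degrees $0$ and $1$. Writing $\Ext^i$ for the $\Ext$--groups of $(E,V)$ in the category of coherent systems, the local structure of $\Gwa(r\cdot\underline 1,d,k)$ at $(E,V)$ is controlled by these groups: the Zariski tangent space is $\Ext^1$ and the obstructions lie in $\Ext^2$, which is identified with $\mathbb{H}^2(C^{\bullet})$. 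Since $E$ is locally free, the sheaves $\cHom(E,E)$, $E$ and $\omega_C\otimes E^*$ are locally free; moreover $C$ is a curve, so all $H^q$ vanish for $q\geq 2$, and being Gorenstein it satisfies Serre duality $H^q(\mathcal F)^*\simeq H^{1-q}(\mathcal F^*\otimes\omega_C)$. Hence the formalism of \cite{BGMN} applies, and the statement reduces to showing that $\Ext^2=0$ and that the expected dimension equals $\beta_C(r,d,k)$.

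The dimension count is a Riemann--Roch computation. As $(E,V)$ is $\wa$--stable it is simple, so $\Ext^0=\mathbb{C}$; combining this with $\chi(\cHom(E,E))=r^2(1-p_a(C))$ and $\chi(E)=d+r(1-p_a(C))$, the alternating sum of the $\Ext$--groups yields $\dim\Ext^1-\dim\Ext^2=\beta_C(r,d,k)$. Consequently, once $\Ext^2=0$ is established, $\Gwa(r\cdot\underline 1,d,k)$ is smooth at $(E,V)$ of dimension $\dim\Ext^1=\beta_C(r,d,k)$.

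It remains to show $\mathbb{H}^2(C^{\bullet})=0$. Since all higher cohomology vanishes, the tail of the hypercohomology sequence of $C^{\bullet}$ reads
$$H^1(\cHom(E,E))\xrightarrow{\ d_1\ }\Hom(V,H^1(E))\longrightarrow\mathbb{H}^2(C^{\bullet})\longrightarrow 0,$$
where $d_1$ is the multiplication $a\mapsto(v\mapsto a\cdot v)$; thus $\mathbb{H}^2(C^{\bullet})=\coker(d_1)$. Applying Serre duality one gets $H^1(\cHom(E,E))^*\simeq H^0(\omega_C\otimes E\otimes E^*)$ and $\Hom(V,H^1(E))^*\simeq V\otimes H^0(\omega_C\otimes E^*)$, and under these identifications the transpose of $d_1$ is precisely the Petri map $\mu_{(E,V)}$. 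Therefore $\mathbb{H}^2(C^{\bullet})^*\simeq\ker(\mu_{(E,V)})$, so $\Ext^2=0$ if and only if $\mu_{(E,V)}$ is injective, and the whole proposition reduces to the injectivity of the Petri map.

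Finally I would deduce this injectivity from the hypothesis on $\ev_V$. Tensoring the sheaf morphism $\ev_V\colon V\otimes\OO_C\to E$ by the locally free sheaf $\omega_C\otimes E^*$ is an exact operation, so
$$\ev_V\otimes\id_{\omega_C\otimes E^*}\colon\ V\otimes(\omega_C\otimes E^*)\longrightarrow\omega_C\otimes E\otimes E^*$$
is again injective; since $H^0(-)$ is left exact, the induced map on global sections is injective as well. A direct check on decomposable tensors $v\otimes s$ shows that this induced map is exactly $\mu_{(E,V)}$, which is therefore injective, giving $\Ext^2=0$ and the claim. I expect the main difficulty to lie not in this last, rather soft, step --- once one recognizes $\mu_{(E,V)}$ as $H^0(\ev_V\otimes\id)$ --- but in checking that the deformation and obstruction theory of \cite{BGMN} transfers faithfully to reducible nodal curves at a locally free point, and in the Serre--duality and Riemann--Roch bookkeeping identifying $\mathbb{H}^2(C^{\bullet})$ with $\ker(\mu_{(E,V)})^*$ and the expected dimension with $\beta_C(r,d,k)$.
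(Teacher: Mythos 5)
Your proposal is correct and takes essentially the same route as the paper: the paper's own proof simply cites \cite{BFCoh}*{Prop 2.7} for the tangent--obstruction package you re-derive from \cite{BGMN} (namely that injectivity of the Petri map forces smoothness of dimension $\beta_C(r,d,k)$ at a locally free point), and then establishes injectivity of $\mu_{(E,V)}$ by exactly your final argument --- tensoring the injective $\ev_V$ with the locally free sheaf $\omega_C\otimes E^*$ and using left-exactness of $H^0$. One bookkeeping caution in your expanded portion: $\mathbb{H}^1$ of your two-term complex is \emph{not} $\Ext^1$ of the coherent system (they differ by the $\Hom(V,V)$ term, although $\mathbb{H}^2\simeq\Ext^2$ does hold, since the tails of the two long exact sequences agree), so the Euler-characteristic count must be run through the full six-term sequence of \cite{BGMN} rather than through $\chi(\mathbb{H}^{\bullet})=\chi(C^0)-\chi(C^1)$ --- this is precisely where the $-k^2$ in $\beta_C(r,d,k)$ comes from, and the naive count would be off by $k^2$.
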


\begin{proof}
The result follows from \cite[Prop 2.7]{BFCoh} once we  prove that the Petri map $\mu_{(E,V)}$ is injective. As $\ev_V$ is injective and $E^*\otimes \omega_C$ is locally free, we have an exact sequence
$$0\to V\otimes E^*\otimes \omega_C\to E\otimes E^*\otimes \omega_C.$$
The induced map in cohomology is  injective and it is $\mu_{(E,V)}$. 
\end{proof}

\section{BGN extensions on nodal  reducible curves}
\label{SEC:BGN}

Let $(C, \w)$ be a polarized   nodal curve with $\gamma$ irreducible  smooth components $C_i$ of genus  $g_i \geq 2$ and $\delta$ nodes. 
In this section we generalize the notion of $BGN$ extension defined in \cite{BGN97} to nodal reducible curves. 
\begin{definition}
\label{DEF:BGN}
Let $r$ and $d$  two positive rational numbers and $k$ an integer  such that $0<k<r$. A BGN extension on $C$ of type $(r,d,k)$ is an extension 
$$ 0  \to V \otimes \OO_C \to E \to F \to 0$$
such that 
\begin{itemize}
\item $V$ is a vector space of dimension $k$;
\item $F$ is a depth one sheaf with  $\wrank(F)=r-k$,  $\wdeg(F) = d$;
\item let $\underline{e}= ({\underline e}_1, \dots, {\underline e}_k) \in \Ext^1(F, V \otimes \OO_C)=V\otimes \Ext^1(F,\OO_C)\simeq \Ext^1(F,\OO_C)^{\oplus k}$ be the corresponding extension class,
 $\{{\underline e}_1, \dots, {\underline e}_k\}$ are linearly independent in $\Ext^1(F,\OO_C)$. 
\end{itemize}
\end{definition}
 
Two BGN extensions are said to be {\it equivalent} if they are equivalent as extensions. A BGN extension does not depend on the choice of $F$ in its isomorphism class. 
\vspace{2mm}

In the  sequel, we will be interested in BGN extensions arising from depth one sheaves $F$ which are $\w$-semistable. Note that, when  $C$ is smooth, and $F$ is semistable with $\deg(F)>0$, then  $h^0(F^*)=0$.  So this condition  was required in the definition of BGN extensions given in \cite{BGN97}. This  property does not always occur if $C$ is reducible. Nevertheless, by Lemma \ref{LEM:h0dual}, this holds when $\w$ is a good polarization.

\begin{proposition}
\label{PROP:dualextension}
Consider a BGN extension
$$\underline{e}:\qquad 0 \to V \otimes \OO_C \to E \to F \to 0$$
of a depth one sheaf $F$ and denote by $(r,d,k)$ its type. Then we have:
\begin{enumerate}[(a)]
\item  $E$ is a depth one sheaf on $C$ with $\wrank(E) = r $ and $\wdeg(E) = d$. 
Moreover,  $E$ is locally free  if and only if $F$ is locally free.
\item The following sequence is also exact
$$\underline{e}^*:\qquad  0 \to F^* \to E^* \to V^* \otimes \OO_C \to 0.$$
\item All  BGN extensions on $C$ of $F$ of type $(r,d,k)$ are classified by   $\Gr(k,H^1(F^*)) \simeq \Gr(k, H^0(F\otimes \omega_C)^*)$. 
\end{enumerate}
\end{proposition}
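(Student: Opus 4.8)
The plan is to prove the three parts in order, with the vanishing $\cExt^1(F,\OO_C)=0$ supplied by Lemma \ref{LEM:EXT1} (valid for any depth one sheaf) as the workhorse throughout. For part (a), I would first invoke additivity of $\wrank$ and $\wdeg$ on short exact sequences of depth one sheaves: the generic rank on each $C_i$ is additive since localizing at the generic point of $C_i$ turns $\underline e$ into an exact sequence of vector spaces, and $\chi$ is additive, so $\wdeg=\chi-\wrank\cdot\chi(\OO_C)$ is additive as well. Since $\wrank(\OO_C)=\sum_i w_i=1$ and $\wdeg(\OO_C)=\chi(\OO_C)-\chi(\OO_C)=0$, additivity gives $\wrank(E)=k+(r-k)=r$ and $\wdeg(E)=0+d=d$. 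To see that $E$ is depth one, I would observe that its torsion subsheaf meets $V\otimes\OO_C$ trivially (the latter being depth one), hence injects into the torsion free sheaf $F$, and therefore vanishes. For the locally free criterion, the vanishing $\cExt^1(F,\OO_C)=0$ forces the extension to split at the level of stalks, i.e.\ $E_p\cong\OO_p^{k}\oplus F_p$ for every $p\in C$; thus $E$ is locally free precisely where $F$ is, which is the claimed equivalence.

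For part (b) I would apply $\cHom(-,\OO_C)$ to $\underline e$ and read off the resulting long exact sequence of $\cExt$-sheaves. Using $\cHom(F,\OO_C)=F^*$, $\cHom(E,\OO_C)=E^*$ and $\cHom(V\otimes\OO_C,\OO_C)=V^*\otimes\OO_C$, together with $\cExt^1(F,\OO_C)=0$, the long exact sequence truncates exactly to the asserted short exact sequence $\underline e^*$, so this part is essentially immediate from Lemma \ref{LEM:EXT1}.

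Part (c) is the substantive one. The first step is to identify the ambient space of extension classes: $\Ext^1(F,V\otimes\OO_C)=V\otimes\Ext^1(F,\OO_C)$, and then to compute $\Ext^1(F,\OO_C)\cong H^1(F^*)$ via the local-to-global spectral sequence for Ext, where $\cExt^1(F,\OO_C)=0$ kills the $q=1$ row and leaves only $H^1(\cHom(F,\OO_C))=H^1(F^*)$. An extension class then corresponds to a linear map $V^*\to H^1(F^*)$, and the BGN condition (linear independence of $\underline e_1,\dots,\underline e_k$ as in Definition \ref{DEF:BGN}) is exactly injectivity of this map, so its image is a $k$-dimensional subspace of $H^1(F^*)$. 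Passing to the equivalence relation, a change of basis of $V$, i.e.\ the action of $\GL(V)=\Aut(V\otimes\OO_C)$ (using that $\Hom(\OO_C,\OO_C)=\C$ since $C$ is connected and reduced), replaces the injective map by one with the same image; hence equivalence classes of BGN extensions of $F$ of type $(r,d,k)$ correspond bijectively to $k$-dimensional subspaces of $H^1(F^*)$, that is, to points of $\Gr(k,H^1(F^*))$. Finally I would apply Serre duality $H^1(F^*)^*\cong H^0\bigl((F^*)^*\otimes\omega_C\bigr)=H^0(F\otimes\omega_C)$, using reflexivity of the depth one sheaf $F$, to obtain $H^1(F^*)\cong H^0(F\otimes\omega_C)^*$ and therefore the natural identification $\Gr(k,H^1(F^*))\cong\Gr(k,H^0(F\otimes\omega_C)^*)$.

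The main obstacle I anticipate is not any single computation but pinning down the classification in (c) correctly: one must make precise that the relevant equivalence between BGN extensions incorporates the $\GL(V)$-action on extension classes, so that the moduli is the Grassmannian of $k$-dimensional \emph{subspaces} of $H^1(F^*)$ rather than merely the variety of injective maps $V^*\hookrightarrow H^1(F^*)$. The remaining care is in cleanly deducing $\Ext^1(F,\OO_C)\cong H^1(F^*)$ from $\cExt^1(F,\OO_C)=0$; parts (a) and (b) are routine once this vanishing and the additivity of $\wrank$ and $\wdeg$ are in hand.
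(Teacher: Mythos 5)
Your proposal is correct and takes essentially the same route as the paper: both arguments rest on the vanishing $\cExt^1(F,\OO_C)=0$ of Lemma \ref{LEM:EXT1} (stalkwise splitting $E_p\simeq \OO_p^k\oplus F_p$ for (a), dualizing the sequence for (b)), and both classify BGN extensions of $F$ by linearly independent $k$-tuples in $\Ext^1(F,\OO_C)$ modulo the $\GL(k)$-action, i.e., by $\Gr(k,\Ext^1(F,\OO_C))$ --- indeed you make the quotient by $\GL(V)\simeq\Aut(V\otimes\OO_C)$ slightly more explicit than the paper does. The only (immaterial) difference is in identifying $\Ext^1(F,\OO_C)$: you pass through the local-to-global spectral sequence to get $H^1(F^*)$ and then use Serre duality together with reflexivity of $F$, whereas the paper applies Serre duality directly via $\Ext^1(F,\OO_C)\simeq\Ext^1(F\otimes\omega_C,\omega_C)\simeq H^0(F\otimes\omega_C)^*\simeq H^1(F^*)$.
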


\begin{proof}
(a) It is enough to describe the stalk of $E$ at nodes. If $p \in C_{i_1} \cap C_{i_2}$, the stalk $F_p$ can be written as $\OO_{p}^{s} \oplus \OO_{x_{i_1}}^{a_1} \oplus \OO_{x_{i_2}}^{a_2}$, with $s+a_j= r_{i_j}$. We have the exact sequence of $\OO_p$-moduli 
\begin{equation}
\label{EQ:STALKS}
0 \to \OO_{p}^{k}  \to E_{p}  \to F_{p} \to 0.
\end{equation}
By Lemma \ref{LEM:EXT1} we have $\Ext^1(F_p,\cO_p)^{\otimes k}=0$ so
$E_p \simeq F_p \oplus \OO_p^k \simeq \OO_{p}^{s+k} \oplus \OO_{x_{i_1}}^{a_1} \oplus \OO_{x_{i_2}}^{a_2},$ which proves (a). 
\vspace{2mm}

(b) This follows from Lemma \ref{LEM:EXT1} by applying $\cHom_{\OO_C}(-,\OO_C)$ to the exact sequence defining the BGN extension.
\vspace{2mm}

(c) All  BGN extensions   on $C$ of $F$  of type $(r,d,k)$ are classified  by linearly independent  $k$-tuples of elements of $\Ext^1(F,\OO_C)$ modulo to the action of $\GL(k)$.
Hence, they are parametrized by the variety $\Gr(k,\Ext^1(F,\OO_C))$. By Serre duality we have
$\Ext^1(F,\cO_C) \simeq \Ext^1(F\otimes \omega_C,\omega_C) \simeq H^0(F\otimes \omega_C)^* \simeq H^1(F^*)$.
\end{proof}

\begin{proposition}
\label{PROP:restriction}
Let $(C, \w)$ be a polarized nodal curve.  Let $F$ be a locally free sheaf on $C$ of rank $r-k$ and degree $d$.  Let $F_i$ be the restriction to the component $C_i$ and $d_i = \deg(F_i)$. 
If $F_i$ is semistable,  $d_i>0$ and $0<k  \leq d_i + (r-k)(g_i -1)$, there exists a non empty open subset $U \subset \Gr(k,H^1(F^*))$ such that any $u \in U$ defines a BGN extension  of $F$ whose restriction to the curve $C_i$ is  a BGN extension on $C_i$  of $F_i$ of type $(r,d_i,k)$. 
\end{proposition}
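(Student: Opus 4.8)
The plan is to regard a point $u \in \Gr(k,H^1(F^*))$ as a $k$-dimensional subspace $W \subseteq \Ext^1_C(F,\OO_C) = H^1(F^*)$, the identification coming from Proposition \ref{PROP:dualextension}(c) together with the local freeness of $F$. Any such $W$ already determines a BGN extension $0 \to V \otimes \OO_C \to E \to F \to 0$ of $F$ of type $(r,d,k)$, so the whole content is the behaviour of its restriction to $C_i$. First I would restrict this extension by tensoring with $\OO_{C_i}$: since $F$, and hence $E$ by Proposition \ref{PROP:dualextension}(a), is locally free, the functor $-\otimes \OO_{C_i}$ is exact on the relevant sheaves and the restricted sequence
$$0 \to V \otimes \OO_{C_i} \to E_i \to F_i \to 0$$
is again exact, with $F_i$ of rank $r-k$ and degree $d_i$. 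Thus it is an extension of $F_i$ of the correct numerical type $(r,d_i,k)$, and the only BGN condition left to arrange is that the restricted extension classes be linearly independent in $\Ext^1_{C_i}(F_i,\OO_{C_i}) = H^1(F_i^*)$.

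The key observation is that restricting the extension class is computed componentwise by the natural map $\phi \colon H^1(F^*) \to H^1(F_i^*)$ induced by the surjection $F^* \to F^*\otimes\OO_{C_i} = F_i^*$; that is, the tuple defining $W$ restricts to $\phi(W)$. Consequently the restriction is a BGN extension on $C_i$ exactly when $\phi|_W$ is injective, i.e. when $W \cap \Ker\phi = 0$. I would therefore set
$$U = \{\, W \in \Gr(k,H^1(F^*)) \;:\; W \cap \Ker\phi = 0 \,\},$$
which is a Zariski open subset of the Grassmannian. It then remains only to show $U \neq \emptyset$, and for this it suffices to prove $k \le \dim\Img\phi$, since a $k$-plane disjoint from $\Ker\phi$ exists precisely when $k \le \dim H^1(F^*) - \dim\Ker\phi = \dim\Img\phi$.

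To pin down $\dim\Img\phi$ I would tensor the defining sequence \eqref{EXSEQ:CB} by the locally free sheaf $F^*$, obtaining
$$0 \to F^* \otimes \OO_{C_i^c}(-\Delta_i) \to F^* \to F_i^* \to 0,$$
whose long exact cohomology sequence realizes $\phi$ as the map induced on $H^1$. Because $C$ is a curve the next term $H^2$ vanishes, so $\phi$ is surjective and $\dim\Img\phi = h^1(F_i^*)$. On the smooth curve $C_i$, semistability of $F_i$ with $\mu(F_i) = d_i/(r-k) > 0$ forces $F_i^*$ to be semistable of negative slope, whence $h^0(F_i^*) = 0$; Riemann--Roch then gives $h^1(F_i^*) = d_i + (r-k)(g_i-1)$. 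The hypothesis $k \le d_i + (r-k)(g_i-1)$ is thus exactly $k \le \dim\Img\phi$, so $U$ is non-empty, which completes the argument.

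The genuinely delicate points are not the dimension count but the two functorial facts it rests on: that tensoring the BGN sequence by $\OO_{C_i}$ stays exact and exhibits the restricted extension class as $\phi$ applied componentwise, and that the map induced on $H^1$ by the surjection $F^* \to F_i^*$ in the tensored \eqref{EXSEQ:CB} coincides with the restriction $\phi$ used in the extension computation. Both follow from the local freeness of $F$, since it makes $-\otimes\OO_{C_i}$ exact and compatible with dualizing, but this is where the care is needed. Once they are established, the surjectivity of $\phi$ (a consequence of $\dim C = 1$) together with the Riemann--Roch computation makes the stated inequality the sharp condition for $U$ to be non-empty.
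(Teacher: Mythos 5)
Your proof is correct and follows essentially the same route as the paper: restrict the extension by tensoring with $\OO_{C_i}$ (exact since $F$ is locally free), identify the restricted extension class with the componentwise image under the restriction map $H^1(F^*)\to H^1(F_i^*)$, and take $U$ to be the locus of $k$-planes meeting its kernel trivially, nonempty because $k\le h^1(F_i^*)=d_i+(r-k)(g_i-1)$ by semistability of $F_i$ and Riemann--Roch. The only minor differences are that you justify surjectivity of the $H^1$ map via the sequence \eqref{EXSEQ:CB} tensored with $F^*$ (which the paper merely asserts), while the paper proves in detail, via a commutative diagram of connecting homomorphisms, the naturality fact that you correctly flag as the delicate point but only sketch.
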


\begin{proof}
Consider a BGN extension    corresponding to ${\underline e} \in \Ext^1(F,V \otimes \OO_C) = V \otimes H^1(F^*)$:
$$\underline{e}:\qquad 0 \to  V \otimes \OO_C \to E \to F \to 0.$$
Since  $F$ is locally free, $\Tor^1_{\OO_C}(\OO_{C_i},F)= 0$, so  by tensoring with $\OO_{C_i}$ we get the exact sequence on $C_i$:
$$0 \to V \otimes \OO_{C_i} \to E_i \to F_i \to 0.$$
Its  corresponding   extension  class is an element ${\underline e}^i \in \Ext^1(F_i,V \otimes \OO_{C_i}) = V \otimes H^1(F_i^*)$, where  $F_i^* = \cHom_{\OO_{C_i}}(F_i,\OO_{C_i})$. 
This gives us a natural map
$\alpha_i:  V \otimes H^1(F^*) \to  V \otimes H^1(F_i^*)$, sending ${\underline e} \to {\underline e}^i$.
As $F$ is locally free, we have $F_i^* \simeq F^* \otimes \OO_{C_i}$. Hence we have a restriction map
$\rho_i \colon F^* \to F_i^*$ on the component $C_i$. 
We claim  that 
 $\alpha_i= \id_V \otimes  H^1(\rho_i)$, where $H^1(\rho_i) \colon H^1(F^*) \to H^1(F_i^*)$  is the map induced by  $\rho_i$. 
In fact,   consider  the following  commutative  diagram:
$$
\xymatrix{
 \Hom(F,V \otimes O_C)   \ar[d] \ar@{^{(}->}[r]  &  \Hom(F,E)  \ar[d] \ar[r] &   \Hom(F,F)  \ar[d] \ar[r]^-{\delta}   & V \otimes H^1(F^*)  \ar[d] \\
 \Hom(F_i,V\otimes \OO_{C_i}) \ar@{^{(}->}[r] & \Hom(F_i,E_i)  \ar[r] &  \Hom(F_i,F_i)  \ar[r]^-{\delta_i}  & V \otimes H^1(F_i^*) 
 }
$$
where the vertical arrows are induced by the restriction to $C_i$.   As the extension  class 
$\underline e$ is the image by $\delta$ of $\id_F$ and ${\underline e}^i$ is the image by $\delta_i$ of $\id_{F_i}$, the claim follows. 
\hfill\par
Since  $h^0(F^*_i) = 0$, 
 to prove that  ${\underline e}^i$ defines a BGN extension on $C_i$ it is enough to verify that  $\{ {\underline e}^i_1, \dots {\underline e}^i_k \}$ are linearly independent vectors in $H^1(F_i^*)$.  $\Gr(k,H^1(F_i^*))\not= \emptyset$ since we assume $k \leq d_i + (r-k)(g_i -1)$.
Note that the restriction map $H^1(\rho_i) \colon H^1(F^*) \to H^1(F_i^*)$ is  a linear surjective map. The image of a $k$-dimensional subspace $W$ of $H^1(F^*)$ has dimension $k$ if and only if it has trivial intersection with the kernel of $H^1(\rho_i)$. This happens for $W$ in an open subset $U$ of $\Gr(k,H^1(F^*))$ since we are assuming $k\leq h^1(F_i^*)$. Hence $H^1(\rho_i)$ induces a rational surjective map between Grassmannian varieties
$$
\xymatrix{
\Gr(k,H^1(F^*)) \ar@{-->}[r]^-{A_i} & \Gr(k,H^1(F_i^*))
}$$
which is defined on $U$. 
Each extension class in $U$ gives, by restriction to the curve $C_i$, a BGN extension class on $C_i$ of $F_i$ of type $(r,d_i,k)$. 
\end{proof}


\section{Coherent systems and BGN extensions}
\label{SEC:3}

Let $(C, \w)$ be a  polarized nodal  curve with $\gamma$ irreducible smooth components of genus $g_i \geq 2$ and $\delta$ nodes. In this section we   will study  moduli spaces $\Gwa(r,d,k)$ with $k<r$  by using   BGN extensions. 
Recall that  $\alpha >0$ is a {\it critical value}  for coherent systems of type $(r,d,k)$  if  it is numerically possible to have a  proper coherent subsystem  $(F,W)$ of $(E,V)$ with 
$$\mu_{\wa}(F,W)= \mu_{\wa}(E,V), \quad \mu_{\w}(F) \not= \mu_{\w}(E).$$ 
In our first result we prove that,  as in the smooth case,  on a polarized curve $(C,\w)$ there are only finitely many  spaces $\Gwa(r,d,k)$ for fixed $(r,d,k)$.
 
\begin{lemma}
\label{LEM:pcritici}
Let $(C,\w)$ be a polarized nodal curve. Fix $M>0$,  there are up to finitely many critical value in $(0,M)$ for coherent systems of type $(r,d,k)$ with   $k < r$: 
$$ 0 < \alpha_1 < \dots < \alpha_{i_M}<M.$$
Moreover, within the intervals $(0, \alpha_1)$, $(\alpha_i, \alpha_{i+1})$ and $\left(\alpha_{i_M},M\right)$ the property of $(\w,\alpha)$-stability is independent of $\alpha$.
\end{lemma}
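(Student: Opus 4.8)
The plan is to reduce both statements to the fact that, for a fixed proper coherent subsystem $(F,W)$ of $(E,V)$, the difference of $\wa$-slopes is an \emph{affine} function of $\alpha$. Writing $r'=\wrank(F)$, $d'=\wdeg(F)$ and $k'=\dim W$, we have
$$ g_{(F,W)}(\alpha):=\mu_{{\underline w},\alpha}(F,W)-\mu_{{\underline w},\alpha}(E,V)=\bigl(\mu_{\w}(F)-\mu_{\w}(E)\bigr)+\alpha\Bigl(\tfrac{k'}{r'}-\tfrac{k}{r}\Bigr). $$
Clearing denominators, the equation $g_{(F,W)}(\alpha)=0$ has, when $rk'\neq r'k$, the unique solution
$$ \alpha_{0}(F,W)=\frac{r'd-rd'}{rk'-r'k}. $$
By definition a critical value is exactly a positive number of this form for which the invariants $(r',d',k')$ are numerically admissible and satisfy $rk'\neq r'k$ together with $\mu_{\w}(F)\neq\mu_{\w}(E)$ (the latter guaranteeing $\alpha_{0}\neq0$).

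First I would prove finiteness. Since $F$ is a depth one subsheaf of $E$, one has $\mrank(F)\leq\mrank(E)$ componentwise; as $w_ir_i\leq\wrank(E)=r$ gives $r_i\leq r/w_m$ with $w_m=\min_i w_i$, the multirank of $F$ — and hence $r'$ — ranges in a finite set, while $k'\in\{0,1,\dots,k\}$ is obviously finite. The essential point, and the main obstacle, is to bound $d'$. Although $\wdeg(F)$ need not be an integer, once the multirank of $F$ is fixed the quantity $\wrank(F)\chi(\OO_C)$ is a fixed rational, so by \eqref{multirank} $d'=\chi(F)-\wrank(F)\chi(\OO_C)$ varies in a fixed coset of $\Z$ in $\Q$, i.e. in a discrete set. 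On the other hand the requirement $\alpha_{0}(F,W)\in(0,M)$ forces $|r'd-rd'|<M\,|rk'-r'k|$, which, for each of the finitely many admissible pairs $(r',k')$, confines $d'$ to a bounded interval. A discrete set meets a bounded interval in finitely many points, so only finitely many admissible triples $(r',d',k')$ survive; each contributes at most one $\alpha_{0}\in(0,M)$, giving the finite ordered list $0<\alpha_{1}<\dots<\alpha_{i_M}<M$.

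Finally I would establish constancy of stability on the complementary intervals. Fix one such interval $I$ (one of $(0,\alpha_1)$, $(\alpha_i,\alpha_{i+1})$, or $(\alpha_{i_M},M)$) and let $\alpha,\alpha'\in I$. For any proper subsystem $(F,W)$ the affine function $g_{(F,W)}$ has at most one zero. If $\mu_{\w}(F)=\mu_{\w}(E)$ then either $g_{(F,W)}\equiv0$, or $g_{(F,W)}(\alpha)=\alpha\bigl(\tfrac{k'}{r'}-\tfrac{k}{r}\bigr)$ has constant sign on $(0,\infty)$. If instead $\mu_{\w}(F)\neq\mu_{\w}(E)$ and $rk'\neq r'k$, then whenever $\alpha_{0}(F,W)$ is positive it is a critical value, hence some $\alpha_j$, and therefore not interior to $I$; so $g_{(F,W)}$ keeps a constant sign on $I$ and does not vanish at any interior point. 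In every case $g_{(F,W)}$ has the same sign, and the same vanishing behaviour, at $\alpha$ and at $\alpha'$. Consequently $\mu_{{\underline w},\alpha}(F,W)<\mu_{{\underline w},\alpha}(E,V)$ holds for all proper $(F,W)$ at $\alpha$ if and only if it holds at $\alpha'$, and likewise with $\leq$; that is, both $\wa$-stability and $\wa$-semistability are independent of $\alpha$ on $I$.
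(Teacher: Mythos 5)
Your proposal is correct and takes essentially the same approach as the paper: the finiteness part coincides (same formula $\alpha_0(F,W)=\frac{r'd-rd'}{rk'-r'k}$, the bound $r_i'\leq r_i\leq r/w_m$ giving finitely many pairs $(r',k')$, and discreteness of $d'$ via $\chi(F)\in\Z$ combined with the bound imposed by $\alpha_0\in(0,M)$). For the constancy on the intervals, your direct sign-constancy analysis of the affine function $g_{(F,W)}(\alpha)$ is just the contrapositive of the paper's interpolation argument, which assumes a change of stability and produces a zero of the same affine slope difference at some $t_0\alpha+(1-t_0)\beta$, hence a critical value inside the interval.
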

\begin{proof}
Denote by $w_m=\min\{w_i \,|\, i=1,\dots,\gamma\}$. The critical value of $\alpha$ for coherent systems of type $(r,d,k)$ can be written as 
$$\alpha = \frac{rd'-r'd}{r'k-rk'} \quad \mbox{ with }\quad  rk' \not= r'k$$
where
\begin{enumerate}
\item{} $k'\in \mathbb N$,  $0 \leq k' \leq k$;
\item{} $r' = \sum_{i=1}^{\gamma}w_ir_i'$, with 
$r_i' \in \mathbb N$, $0 \leq  r_i' \leq r_i\leq r/w_{m}$;
\item{} $d' + r'\chi(\OO_C) \in \mathbb Z$.
\end{enumerate}
Note that, by (1) and (2),   there are only finite possibilities for $k'$ and $r'$. Moreover,  by (3), once we fix $r'$, $d'$   varies in a discrete set.  Since $\alpha \in (0, M)$, this gives finitely values for $d'$. 
\hfill\par
In order to  prove the last assertion,  let $\alpha, \beta \in (0,M)$ with $\alpha,\beta$ not critical. Assume that there exists $(E,V)$  which is $(\w,\alpha)$-stable but it is not $(\w,\beta)$-stable. It is enough to prove that between $\alpha$ and $\beta$ there is a critical value. Let  $(F,W)$ be a coherent subsystem of $(E,V)$ such that $\mu_{(\w,\beta)}(F,W) \geq  \mu_{(\w,\beta)}(E,V)$.
If $\mu_{(\w,\beta)}(F,W) = \mu_{(\w,\beta)}(E,V)$, as $\beta$ is not a critical value, we have $\mu_{\w}(F)= \mu_{\w}(E)$ and so $\frac{\dim W}{\wrank(F)} = \frac{\dim V}{\wrank(E)}$. This implies that $\mu_{(\w,\alpha)}(F,W)= \mu_{(\w,\alpha)}(E,V)$, which contradicts the $(\w,\alpha)$-stability of $(E,V)$. So we can assume
$$\mu_{(\w,\beta)}(F,W) - \mu_{(\w,\beta)}(E,V) > 0 \quad \quad \mu_{(\w,\alpha)}(F,W) - \mu_{(\w,\alpha)}(E,V) < 0.$$
We claim that there exists $t \in (0,1)$ such that $t\alpha + (1-t) \beta$ is a critical value. 
Let $t \in (0,1)$, then we have:  
$$\mu_{(\w,t\alpha +(1-t)\beta)}(F,W) - \mu_{(\w,t\alpha+ (1-t)\beta)}(E,V)  =$$
$$t [\mu_{(\w,\alpha)}(F,W)- \mu_{(\w,\alpha)}(E,V)] + (1-t)[\mu_{(\w,\beta)}(F,W)- \mu_{(\w,\beta)}(E,V)].$$
So there exists $t_0 \in (0,1)$ such that $$\mu_{(\w,t_0\alpha +(1-t_0)\beta)}(F,W) - \mu_{(\w,t_0\alpha+ (1-t_0)\beta)}(E,V)= 0.$$
If $\mu_{\w}(F)=\mu_{\w}(E)$ we get a contradiction as before, whereas if $\mu_{\w}(F) \not=  \mu_{\w}(E)$, then $t_0 \alpha + (1-t_0) \beta  \in (0,M)$ is a critical value. 
\end{proof}

\begin{proposition}
\label{PROP:alphaL}
Let $(C,\w)$ be a polarized nodal curve. 
Let $r >0$ and  $d$ be rational numbers and  $k$ be an integer such that $0 < k < r$.
Then the moduli space $\Gwat(r,d,k)$ is  empty for any  $\alpha > \frac{d+r\lambda_{\w}}{r-k}$ and 
$\Gwa(r,d,k)$ is empty for  any $\alpha  \geq  \frac{d+r\lambda_{\w}}{r-k}.$
Moreover, if $\Gwa(r,d,k) \not= \emptyset$, then $d> -r\lambda_{\w}$.
\end{proposition}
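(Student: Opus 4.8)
The plan is to show that any $\wa$-semistable (respectively $\wa$-stable) coherent system of type $(r,d,k)$ with $k<r$ forces an upper bound on $\alpha$, and then to read off the three assertions by contraposition. The destabilizing subsystem I would use is the one generated by the sections themselves: given such an $(E,V)$, set $E'=\Img(\ev_V)\subseteq E$, the subsheaf generated by $V$. Since $E$ is depth one, so is its subsheaf $E'$, and $V\subseteq H^0(E')$, so $(E',V)$ is a coherent subsystem which is generated by construction. Writing $r'=\wrank(E')$ and $d'=\wdeg(E')$, Lemma \ref{LEM:generatedsystem}(a) applied to the generated system $(E',V)$ of type $(r',d',k)$ gives $r'\le k$ (and $r'>0$ since $V\neq 0$ yields $E'\neq 0$, a nonzero depth one sheaf); in particular $r'\le k<r$, so $(E',V)$ is a \emph{proper} subsystem of $(E,V)$.

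Next I would bound $d'$ from below. Lemma \ref{LEM:generatedsystem}(a) also gives $\deg(E'_i)\ge 0$ for every component, hence $\sum_i\deg(E'_i)\ge 0$. Combining this with the identity $\Delta_{\w}(E')=\wdeg(E')-\sum_i\deg(E'_i)$ and the inequality $\Delta_{\w}(E')\ge-\lambda_{\w}\wrank(E')$ (the remark following Definition \ref{DEF:lambdaw}), I obtain $d'=\wdeg(E')\ge-\lambda_{\w}r'$, equivalently $-rd'\le r\lambda_{\w}r'$.

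Now I would feed $(E',V)$ into the (semi)stability inequality. From $\mu_{\wa}(E',V)\le\mu_{\wa}(E,V)$, clearing the positive denominators $r,r'$ yields $\alpha k(r-r')\le r'd-rd'$, and the bound on $d'$ turns the right-hand side into $r'd-rd'\le r'(d+r\lambda_{\w})$, so that $\alpha k(r-r')\le r'(d+r\lambda_{\w})$. Since $\alpha>0$, $k>0$ and $0<r'<r$, the left-hand side is strictly positive, which already forces $d+r\lambda_{\w}>0$; dividing then gives $\alpha\le\frac{r'(d+r\lambda_{\w})}{k(r-r')}$. The elementary inequality $\frac{r'}{k(r-r')}\le\frac{1}{r-k}$ (equivalent to $r'\le k$, all denominators being positive), multiplied by the now-positive quantity $d+r\lambda_{\w}$, yields $\alpha\le\frac{d+r\lambda_{\w}}{r-k}$. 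In the $\wa$-stable case every stability inequality is strict, so the identical computation gives $\alpha<\frac{d+r\lambda_{\w}}{r-k}$.

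Finally I would conclude by contraposition. If $\alpha>\frac{d+r\lambda_{\w}}{r-k}$ then no $\wa$-semistable system of this type exists, so $\Gwat(r,d,k)=\emptyset$; if $\alpha\ge\frac{d+r\lambda_{\w}}{r-k}$ the strict bound shows $\Gwa(r,d,k)=\emptyset$; and if $\Gwa(r,d,k)\neq\emptyset$ a stable system exists, whence the strict positivity of the left-hand side above forces $d+r\lambda_{\w}>0$, i.e. $d>-r\lambda_{\w}$. The one genuinely load-bearing point is the sign bookkeeping: the monotonicity step $\frac{r'}{k(r-r')}\le\frac{1}{r-k}$ preserves the desired direction only after multiplying by $d+r\lambda_{\w}$, so securing $d+r\lambda_{\w}>0$ from $\alpha>0$ \emph{before} dividing is essential. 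Everything else is the standard generated-subsystem estimate, adapted to the reducible setting through the correction term $\lambda_{\w}$.
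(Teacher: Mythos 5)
Your proof is correct and follows essentially the same route as the paper: both take the image of the evaluation map $\ev_V$ as the generated proper subsystem (the paper's $(F,V)$ with $\wrank(F)=s$ is your $(E',V)$), invoke Lemma \ref{LEM:generatedsystem} for $0<s\leq k$ and $\deg(F_i)\geq 0$, control $\wdeg(F)$ via $\Delta_{\w}(F)\geq -\lambda_{\w}\wrank(F)$, and then extract $\alpha\leq\frac{s}{k(r-s)}(d+r\lambda_{\w})\leq\frac{d+r\lambda_{\w}}{r-k}$ from the (semi)stability inequality, with strict inequalities in the stable case yielding $d>-r\lambda_{\w}$. Your explicit care in establishing $d+r\lambda_{\w}>0$ before dividing matches the paper's step ``$0\leq\alpha\leq\frac{s}{k(r-s)}(d+r\lambda_{\w})$ implies $d+r\lambda_{\w}\geq 0$'', so there is nothing to flag.
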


\begin{proof} 
Let $(E,V)$ be a coherent system of type $(r,d,k)$ which is $\wa$-semistable.  We have
$\wrank(E) =  r$ and 
$\wdeg(E) =  \chi(E) -\wrank(E)\chi(\OO_C) = d$. 
Since $ k< r$ the evaluation map $\ev_V \colon V \otimes \OO_C \to E$ is not surjective, let $F$ be its image. It is a non zero sheaf and it is of depth one so $(F,V)$ is a generated coherent system, which is a proper coherent subsystem  of $(E,V)$. 
Let $s=\wrank(F)$, by Lemma \ref{LEM:generatedsystem}, we have $0<s\leq k$.   By $\wa$-semistability  of $(E,V)$ we have $\mu_{\wa}(F,V) \leq \mu_{\wa}(E,V)$, i.e.
\begin{equation}
\label{dis1}
\frac{\wdeg(F)}{s} + \alpha \frac{k}{s} \leq \frac{d}{r} + \alpha \frac{k}{r}.
\end{equation}
This is equivalent to the following inequality:
$$k\alpha\left(\frac{1}{s}-\frac{1}{r}\right) \leq \frac{d}{r} - \frac{\wdeg(F)}{s}.$$
As $0 < s \leq k < r$, by Lemma \ref{LEM:wdegree} and Definition \ref{DEF:lambdaw} we can write
$$ \alpha \leq \frac{sd}{k(r-s)}- \frac{r\wdeg(F)}{k(r-s)}\leq\frac{sd}{k(r-s)}- \frac{r\sum_{i=1}^{\gamma}\deg(F_i)}{k(r-s)}+ \frac{rs\lambda_{\w}}{k(r-s)}.$$
Since $(F,V)$ is generated, by Lemma \ref{LEM:generatedsystem}, we have $\deg(F_i) \geq 0$ so we obtain
\begin{equation}
0 \leq  \alpha \leq \frac{s}{k(r-s)}(d+r\lambda_{\w})
\end{equation}
This implies that $d+r\lambda_{\w}\geq 0$. Hence, since $r-s \geq r-k$ we obtain
\begin{equation}
\alpha \leq \frac{d+r\lambda_{\w}}{r-k}.
\end{equation}
If we assume the existence of $(E,V)$  which is $\wa$-stable then we would get 
\begin{equation}
\alpha < \frac{d+r\lambda_{\w}}{r-k},
\end{equation}
and $d > -r\lambda_{\w}$.
\end{proof}

\begin{corollary}
\label{COR:goodcase}
Let $(C,\w)$ be  a polarized nodal curve with $\w$ good.  If  $\Gwa(r,d,k) \neq \emptyset$, then $d>0$ and $\alpha\in (0,d/(r-k))$.
\end{corollary}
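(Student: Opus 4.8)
The plan is to read off Corollary \ref{COR:goodcase} directly from Proposition \ref{PROP:alphaL} by specializing to the good polarization case, where the extra parameter $\lambda_{\w}$ vanishes. Recall that by Definition \ref{DEF:lambdaw}, when $\w$ is good we have $\lambda_{\w}=0$. So the entire argument reduces to substituting this value into the bounds already established in Proposition \ref{PROP:alphaL}.

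First I would recall the two conclusions of Proposition \ref{PROP:alphaL} under the hypothesis $\Gwa(r,d,k)\neq\emptyset$: namely that $d>-r\lambda_{\w}$ and that $\Gwa(r,d,k)$ is empty for all $\alpha\geq \frac{d+r\lambda_{\w}}{r-k}$. Setting $\lambda_{\w}=0$, the first inequality becomes $d>0$, which gives the positivity of the degree. For the range of $\alpha$, the emptiness bound becomes $\alpha < \frac{d}{r-k}$, so any $\alpha$ for which the moduli space is nonempty must satisfy $\alpha\in\left(0,\tfrac{d}{r-k}\right)$. The lower bound $\alpha>0$ is built into the standing convention (we consider $\alpha$ positive, as fixed before the construction of $\tilde{\mathcal G}_{(C,\w),\alpha}$), so nothing further is needed there.

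There is essentially no obstacle here: the statement is a clean corollary and the only thing to verify is that the hypotheses of Proposition \ref{PROP:alphaL} are met, namely that $r>0$, that $d$ is rational, and that $0<k<r$ with $k$ an integer. These are exactly the standing assumptions on the type $(r,d,k)$ in this section, so the specialization is legitimate. One could alternatively invoke Lemma \ref{LEM:h0dual} or Lemma \ref{LEM:generatedsystem}(b) to re-derive $d>0$ independently, since for good polarizations the generated subsystem $(F,V)$ produced in the proof of Proposition \ref{PROP:alphaL} has $\deg(F_i)\geq 0$ and hence $\wdeg(F)\geq 0$ without needing the correction term; but citing the proposition is the most economical route. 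The only point worth stating explicitly in the write-up is that $\lambda_{\w}=0$ precisely because $\w$ is good, so that the reader sees the mechanism by which the cleaner bounds emerge.
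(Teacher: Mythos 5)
Your proof is correct and matches the paper's (implicit) argument exactly: the corollary is stated without proof precisely because it is the specialization of Proposition \ref{PROP:alphaL} to the case $\lambda_{\w}=0$, which holds for good polarizations by Definition \ref{DEF:lambdaw}. Your verification of the hypotheses and your note on the standing convention $\alpha>0$ are both accurate and nothing further is needed.
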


We point out  that these bounds are exactly the same  which hold for a smooth curve (see \cite{BDGW}) and an irreducible nodal curve (see \cite{Bho}). 
\vspace{2mm}

When $C$ is  a smooth or a nodal irreducible curve, moduli spaces of $\alpha$-stable coherent systems with $k<r$ are closely related  to  BGN extensions. We would like to show that this connection holds also on nodal reducible curve.  Note that, given a BGN extension
$$\underline{e}:\quad  0 \to V \otimes \OO_C \to E \to F \to 0$$
of type $(r,d,k)$ we have a coherent system $(E,V)$ of type $(r,d,k)$, such that  $\ev_V$ is injective. We will call $(E,V)$ the coherent system defined by $\underline e$.

\begin{lemma}
\label{LEM:posdegree}
Let $(C,\w)$ be  a polarized nodal curve.
Let $r$ and  $d$ be rational numbers and  $k$ be an integer such that $0 < k < r$. Let $(E,V)$ be a $\wa$-semistable  coherent system of type $(r,d,k)$. If the evaluation map $ev_V \colon V \otimes \OO_C \to E$ is injective, then either $d > 0$ and $\alpha \leq d/(r-k)$ or $\alpha=d=0$.
\end{lemma}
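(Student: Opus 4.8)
The plan is to test $\wa$-semistability of $(E,V)$ against the tautological subsystem coming from the evaluation map. Since $\ev_V\colon V\otimes\OO_C\to E$ is injective and $\dim V=k$, its image is isomorphic to $V\otimes\OO_C\simeq\OO_C^k$, and I would consider the resulting coherent subsystem $(V\otimes\OO_C,V)$ of $(E,V)$. That this really is a subsystem follows because the global sections in $V$ factor through the image sheaf, so $V\subseteq H^0(V\otimes\OO_C)$; in fact $H^0(V\otimes\OO_C)=V$, since $C$ is connected and reduced and hence $H^0(\OO_C)=\C$. The subsystem is proper: it is nonzero as $k\geq 1$, and it is not all of $(E,V)$ because $\wrank(V\otimes\OO_C)=k<r=\wrank(E)$.

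The next step is an elementary slope computation. From the definition of $\wdeg$ one has $\wdeg(\OO_C)=\chi(\OO_C)-\wrank(\OO_C)\chi(\OO_C)=0$, and, $\OO_C$ having multirank $\underline{1}$, it follows that $\wrank(\OO_C^k)=k$ and $\wdeg(\OO_C^k)=0$. Therefore $\mu_{\w}(V\otimes\OO_C)=0$ and
$$\mu_{\wa}(V\otimes\OO_C,V)=0+\alpha\frac{k}{k}=\alpha,\qquad \mu_{\wa}(E,V)=\frac{d}{r}+\alpha\frac{k}{r}.$$
Applying $\wa$-semistability to the proper subsystem $(V\otimes\OO_C,V)$ gives $\alpha\leq \frac{d}{r}+\alpha\frac{k}{r}$, which rearranges, using $r-k>0$, to
$$\alpha(r-k)\leq d,\qquad\text{i.e.}\qquad \alpha\leq\frac{d}{r-k}.$$

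To finish I would split into cases according to the sign of $d$, invoking the standing assumption $\alpha\geq 0$ on the parameter. If $d>0$ we land in the first alternative $\alpha\leq d/(r-k)$. If $d=0$, then $\alpha(r-k)\leq 0$ together with $r-k>0$ and $\alpha\geq 0$ forces $\alpha=0$, so $\alpha=d=0$. The remaining case $d<0$ cannot occur, as it would force $\alpha(r-k)\leq d<0$ and hence $\alpha<0$, contradicting $\alpha\geq 0$.

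I do not expect a genuine obstacle here: the whole argument is the semistability inequality applied to the canonical inclusion $\OO_C^k\hookrightarrow E$. The only points demanding a little care are the bookkeeping $\wdeg(\OO_C^k)=0$ and $\wrank(\OO_C^k)=k$, and the verification that $(V\otimes\OO_C,V)$ is a \emph{proper} subsystem, which is exactly where the hypothesis $0<k<r$ enters; once these are in place the dichotomy is purely formal.
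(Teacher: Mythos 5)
Your proposal is correct and follows essentially the same route as the paper: both test $\wa$-semistability against the proper coherent subsystem $(V\otimes\OO_C,V)$ arising from the injective evaluation map, derive $\alpha\leq \frac{d}{r}+\alpha\frac{k}{r}$, and rearrange using $0<k<r$ to obtain $\alpha\leq d/(r-k)$ and the dichotomy $d>0$ or $\alpha=d=0$. The extra bookkeeping you supply (properness of the subsystem, $\wdeg(\OO_C^k)=0$, the sign analysis via $\alpha\geq 0$) simply makes explicit what the paper leaves implicit.
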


\begin{proof}
 Since $ev_V$ is injective and $k < r$, $V \otimes \OO_C$  is a  proper subsheaf of $E$. 
The pair $(V \otimes \OO_C, V)$ is a proper coherent subsystem of $(E,V)$. Since it is   $\wa$-semistable  we have:
$$\mu_{\wa}(V \otimes \OO_C,V) \leq \mu_{\wa}(E,V),$$
that is
$$ \alpha \leq \frac{d}{r} + \alpha\frac{k}{r},$$
 since $0 < k < r$ this implies  $\alpha \leq d/(r-k)$ and  hence $d >0$ unless $\alpha=0$ (which implies $d=0$).
\end{proof}

In light of these facts and the discussion made above, it is natural to fix a good polarization $\w$ on $C$. Hence, from now on,  $\w$ will be a good polarization. Then, by Proposition \ref{PROP:alphaL}, all critical values for coherent systems of type $(r,d,k)$ are in the interval $\left(0,\frac{d}{r-k}\right)$. We will denote by $\alpha_L$ the biggest among the critical values. We will denote by $\GwL
(r,d,k)$ the {\it "limit"} moduli space of coherent systems of type $(r,d,k)$ which are $\wa$-stable for $\alpha \in (\alpha_L,\frac{d}{r-k})$. 
 
\begin{lemma}
\label{LEM:alphaI}
Let $C$  be a  nodal curve and $\w$ be a good polarization. 
Let $r >0$ and  $d > 0 $ be rational numbers and  let $k$ be an integer with $0 < k < r$. There exists $ 0 < \alpha_I < \frac{d}{r-k}$ such that  if  $\alpha  \in (\alpha_I, \frac{d}{r-k})$ 
 for any $\wa$-semistable coherent system $(E,V)$ the 
 evaluation map $ev_V \colon  V \otimes \OO_C \to E$ is injective. 
 \end{lemma}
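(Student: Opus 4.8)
The plan is to argue by contradiction on the threshold: I will show that if $(E,V)$ is $\wa$-semistable of type $(r,d,k)$ and $\ev_V$ fails to be injective, then $\alpha$ is forced below an explicit value $\alpha_I$ with $\alpha_I<d/(r-k)$. Taking this $\alpha_I$ as the sought constant then guarantees injectivity of $\ev_V$ for all such systems when $\alpha\in(\alpha_I,d/(r-k))$. Concretely, I set $G=\Img(\ev_V)$ and $K=\Ker(\ev_V)$, giving an exact sequence $0\to K\to V\otimes\OO_C\to G\to 0$ with all terms of depth one. Since every $v\in V$ factors through $G$, the pair $(G,V)$ is a \emph{generated} coherent subsystem of $(E,V)$, and it is proper: $G\neq 0$ because $V\neq 0$, while $G\neq E$ because $(E,V)$ cannot be generated when $k<r$ by Lemma \ref{LEM:generatedsystem}(a).

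First I record the numerical data of $(G,V)$. Comparing generic ranks along each component $C_i$ in the defining sequence (at the generic point of $C_i$ the curve is smooth and the stalks are free, so the ranks add) yields $\wrank(K)+\wrank(G)=\wrank(V\otimes\OO_C)=k$, hence $\wrank(G)=k-\rho$ with $\rho:=\wrank(K)$. Here injectivity enters decisively: if $\ev_V$ is not injective then $K\neq 0$ is a nonzero depth one sheaf, so its support contains a full component and thus $\rho\geq w_m$, where $w_m=\min_i w_i>0$; moreover $\rho<k$ since $\wrank(G)>0$. Because $\w$ is good and $(G,V)$ is generated of type $(k-\rho,\wdeg(G),k)$, Lemma \ref{LEM:generatedsystem}(b) gives $\wdeg(G)\geq 0$.

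Next I impose semistability. The inequality $\mu_{\wa}(G,V)\leq\mu_{\wa}(E,V)$ reads
$$\frac{\wdeg(G)}{k-\rho}+\alpha\frac{k}{k-\rho}\leq \frac{d}{r}+\alpha\frac{k}{r},$$
and since $0<k-\rho<r$ the coefficient of $\alpha$ on the left exceeds that on the right, so solving for $\alpha$ and discarding the nonpositive term $-r\wdeg(G)$ gives
$$\alpha\leq \frac{(k-\rho)d-r\wdeg(G)}{k(r-k+\rho)}\leq \frac{(k-\rho)d}{k(r-k+\rho)}.$$
The right-hand side is decreasing in $\rho$ on $(0,k)$, so using $\rho\geq w_m$ I bound it by its value at $\rho=w_m$ and set
$$\alpha_I=\frac{(k-w_m)d}{k(r-k+w_m)}.$$

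Finally I check $0<\alpha_I<d/(r-k)$. Positivity is clear since $k\geq 1>w_m$ and $r>k$, while the strict inequality $\alpha_I<d/(r-k)$ reduces, after clearing the positive denominators and using $d>0$, to $-w_m\,r<0$, which holds. Thus for $\alpha\in(\alpha_I,d/(r-k))$ no $\wa$-semistable system of type $(r,d,k)$ can have non-injective $\ev_V$, as that would force $\alpha\leq\alpha_I$. The genuinely delicate point is not the computation but the discreteness estimate $\rho\geq w_m$ (together with the additivity of $\w$-rank that I handle via generic points): this uniform lower bound on $\rho$ is precisely what keeps the threshold away from $d/(r-k)$, and it is therefore the crux of the argument, the place where reducibility and the choice of a good polarization are used.
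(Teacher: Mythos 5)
Your proof is correct and takes essentially the same route as the paper's: you pass to the generated proper subsystem $(\Img(\ev_V),V)$ with $\wrank \geq w_m$ on the kernel side, use goodness of $\w$ (via Lemma \ref{LEM:generatedsystem}(b)) to get $\wdeg(\Img(\ev_V))\geq 0$, and the $\wa$-semistability inequality then yields the identical threshold $\alpha_I=\frac{(k-w_m)d}{k(r-k+w_m)}<\frac{d}{r-k}$. The only cosmetic difference is that the paper re-derives the nonnegativity of the $\w$-degree inline (via $\deg(F_i)\geq 0$ and $\lambda_{\w}=0$) instead of citing the lemma, and note that goodness of $\w$ enters precisely there rather than in the bound $\wrank(\Ker(\ev_V))\geq w_m$, which only uses purity.
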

 \begin{proof}
Let  $0 < \alpha < \frac{d}{r-k}$ and let  $(E,V)$ be a coherent system  which is $\wa$-semistable.  Assume that 
the evaluation map $ev_V$ is not injective, so we have an exact sequence of sheaves:
$$ 0  \to N \to V \otimes \OO_C \to F \to 0,$$
where $N$ and $F$ are non zero sheaves satisfying the following  properties: 
\hfill\par
- $N$   is a  proper subsheaf  of $V \otimes \OO_C$, so it is  a depth one sheaf too;   let $\wrank(N) = \eta$,  then  we have $\eta \geq w_m$, where $w_m = \min(w_1,\dots ,w_{\gamma})$.
 \hfill\par
- $F$ is   proper subsheaf of $E$, so $F$ is a depth one sheaf too; let   $\wrank(F) = s$,  then we have  $$s= k - \eta, \quad  w_m \leq s \leq k - w_m.$$    
The pair $(F,V)$ is  a generated  coherent system,  which is a  proper subsystem of $(E,V)$.  As this is $\wa$-semistable, we have $\mu_{\wa}(F,V) \leq \mu_{\wa}(E,V),$ which implies
\begin{equation}
\label{dis3}
\frac{\wdeg(F)}{s} + \alpha \frac{k}{s} \leq \frac{d}{r} + \alpha \frac{k}{r}.
\end{equation}
As $s <k <r$, we can proceed as  in the proof of Proposition  \ref{PROP:alphaL} and we obtain:
$$\alpha \leq \frac{sd}{k(r-s)}- \frac{r\wdeg(F)}{k(r-s)}\leq\frac{sd}{k(r-s)}- \frac{r\sum_{i=1}^{\gamma}\deg(F_i)}{k(r-s)}+ \frac{rs\lambda_{\w}}{k(r-s)}$$
Since $(F,V)$ is generated, then $deg(F_i) \geq 0$. Then, as $\w$ is good, we have $\lambda_{\w}=0$. Hence we obtain
$$\alpha \leq \frac{sd}{k(r-s)}.$$
Finally as $s \leq k-w_m$ we get:
\begin{equation}
    \label{EQ:alpha1}
\alpha \leq \frac{(k-w_m)d}{k(r-k+w_m)}=\alpha_I.   
\end{equation}
Note that $ 0 < \alpha_I < \frac{d}{r-k}$. 
We can conclude that  if $\alpha \in (\alpha_I, \frac{d}{r-k})$  the evaluation map is injective for any $\wa$-semistable $(E,V)$.  
\end{proof}

\begin{remark}
Notice that,  without assuming $\w$ good, the proof of Lemma \ref{LEM:alphaI} still works and yields the bound
$$\alpha \leq (d+r\lambda_{\w})\frac{(k-w_m)}{k(r-k+w_m)}=\alpha_I'.$$
Hence, if $\alpha>\alpha_I'$, every $\wa$-semistable coherent system has $\ev_V$ injective. Nevertheless, by Lemma \ref{LEM:posdegree}, for such coherent systems one has necessarily $\alpha<d/(r-k)$. Unfortunately, for a general polarization, it can happens that $\alpha_I'>d/(r-k)$.
\end{remark}

\begin{lemma}
\label{LEM:alphaT}
Let $C$ be  a nodal curve and $\w$ be a good polarization on it. 
Let $r >0$ and  $d > 0 $ be rational numbers and   let $k$ be an integer with  $0 < k < r$.
There exists $ 0 < \alpha_T < \frac{d}{r-k}$ such that  if  $\alpha  \in (\alpha_T, \frac{d}{r-k})$ 
 for any $\wa$-semistable coherent system $(E,V)$ we have an exact sequence: 
\begin{equation}
 0 \to V \otimes \OO_C \to E \to F \to 0,
\end{equation}
where  $F$ is a sheaf of depth one. 
\end{lemma}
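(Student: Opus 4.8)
The plan is to push $\alpha$ above the threshold $\alpha_I$ provided by Lemma \ref{LEM:alphaI}, so that the evaluation map $\ev_V\colon V\otimes\OO_C\to E$ is already injective. Once injectivity is in hand, the exact sequence $0\to V\otimes\OO_C\to E\to F\to 0$ exists with $F=\coker(\ev_V)$, and since $\wrank(V\otimes\OO_C)=k$ and $\wrank(E)=r$ we get $\wrank(F)=r-k>0$. Thus $F$ is a sheaf of positive $\w$-rank, and the only obstruction to its being of depth one is the presence of a nonzero torsion subsheaf supported on points. The whole point is therefore to rule out such torsion when $\alpha$ is close enough to $\tfrac{d}{r-k}$.

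Suppose $F$ is not of depth one, and let $\tau\subseteq F$ be its maximal torsion subsheaf; it is nonzero, supported on finitely many points, with length $\ell(\tau)=\chi(\tau)\geq 1$ and $\wrank(\tau)=0$. I would pull $\tau$ back along $E\to F$ to obtain a subsheaf $E'\subseteq E$ sitting in $0\to V\otimes\OO_C\to E'\to\tau\to 0$. Using $\wdeg(V\otimes\OO_C)=\chi(V\otimes\OO_C)-k\,\chi(\OO_C)=0$ and that for a point-supported sheaf $\wdeg$ equals the length, one computes $\wrank(E')=k$ and $\wdeg(E')=\ell(\tau)>0$. The pair $(E',V)$ (with $V$ lifted to $H^0(E')$ via $V\otimes\OO_C\subseteq E'$) is then a proper coherent subsystem of $(E,V)$, proper because $\wrank(E')=k<r$ and nonzero because $k\geq 1$.

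Applying $\wa$-semistability to $(E',V)$ gives $\mu_{\wa}(E',V)\leq \mu_{\wa}(E,V)$, that is
$$\frac{\ell(\tau)}{k}+\alpha\leq \frac{d}{r}+\alpha\frac{k}{r},$$
which rearranges (exactly as in the proof of Proposition \ref{PROP:alphaL}) to
$$\alpha\leq \frac{d}{r-k}-\frac{r\,\ell(\tau)}{k(r-k)}\leq \frac{d}{r-k}-\frac{r}{k(r-k)},$$
using $\ell(\tau)\geq 1$. Hence, setting
$$\alpha_T=\max\left\{\alpha_I,\ \frac{dk-r}{k(r-k)}\right\},$$
the existence of torsion in $F$ is impossible for every $\alpha\in(\alpha_T,\tfrac{d}{r-k})$, so $F$ is of depth one. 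Moreover $0<\alpha_T<\tfrac{d}{r-k}$, since $\alpha_I$ satisfies these bounds by Lemma \ref{LEM:alphaI} and $\tfrac{dk-r}{k(r-k)}=\tfrac{d}{r-k}-\tfrac{r}{k(r-k)}<\tfrac{d}{r-k}$.

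I expect the only genuine subtlety to be the bookkeeping of $\wrank$ and $\wdeg$ for $\tau$ and $E'$: one must record that $\wdeg(V\otimes\OO_C)=0$ and that a point-supported sheaf has $\wdeg$ equal to its length, so that the strict positivity $\wdeg(E')=\ell(\tau)>0$ is precisely what conflicts with semistability as $\alpha\to\tfrac{d}{r-k}$. Everything else is a direct adaptation of the slope manipulation already used for the injectivity statement, and requires no input beyond the results established earlier in the section.
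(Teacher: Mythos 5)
Your proof is correct and takes essentially the same route as the paper's: injectivity of $\ev_V$ via Lemma \ref{LEM:alphaI}, then pulling the maximal torsion subsheaf of $F$ back to a proper subsystem $(E',V)$ of type $(k,\ell(\tau),k)$ whose $\wa$-semistability inequality forces $\alpha\leq \frac{d}{r-k}-\frac{r}{k(r-k)}$, and finally setting $\alpha_T$ to be the maximum of this threshold and $\alpha_I$. Even the bookkeeping ($\wdeg(E')=\ell(\tau)\geq 1$, $\wrank(E')=k$) coincides with the paper's computation.
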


\begin{proof}
Let $\alpha \in (\alpha_I, \frac{d}{r-k})$. Let $(E,V)$ be a coherent system which is $\wa$-semistable. By Lemma \ref{LEM:alphaI} the evaluation map $V \otimes \OO_C \to E$ is injective. 
So we have an exact sequence of sheaves:
$$ 0 \to V \otimes \OO_C \to E \to F \to 0.$$
Assume that $F$ is not of depth one. Let $T$ be the maximal torsion subsheaf of $F$. Then $\dim  T = 0$, $\wrank(T) = 0$ and $\wdeg(T) = \chi(T) = \tau \geq 1$. 
 The quotient $F_0 = F/T$ is a depth one sheaf on $C$. We have the following commutative diagram:

\begin{equation}
\xymatrix{
V\otimes \OO_C \ar@{^{(}->}[r] \ar@{=}[d] &
    E_0 \ar@{->>}[r]\ar@{^{(}->}[d] &
    T \ar@{^{(}->}[d] \\
V\otimes \OO_C \ar@{^{(}->}[r] &
    E \ar@{->>}[r]\ar@{->>}[d] &
    F \ar@{->>}[d] \\
 &
    F_0\ar@{=}[r] & 
    F_0
}
\end{equation}

 The pair $(E_0,V)$ is a proper  coherent subsystem of $(E,V)$ of type $(k,\tau,k).$
 By $\wa$-semistability of $(E,V)$  we have:
 $$\mu_{\wa}(E_0,V) \leq \mu_{\wa}(E,V),$$
 that is:
 $$\frac{\tau}{k} + \alpha  \leq \frac{d}{r} + \alpha \frac{k}{r}.$$
 This implies:
 $$ \alpha\left(1 - \frac{k}{r}\right) \leq \frac{d}{r}- \frac{\tau}{k}.$$
Since $k < r$ and $\tau \geq 1$,  we have:
$$ \alpha \leq \frac{d}{r-k}-\frac{r\tau }{k(r-k)} \leq \frac{d}{r-k} - \frac{r}{k(r-k)}=\tilde{\alpha}_T.$$
Note that $\tilde{\alpha}_T < \frac{d}{r-k} $. 
Hence we can conclude by defining $\alpha_T$ to be the maximum between $\alpha_I$ and $\tilde{\alpha}_T$.
\end{proof}

Let  $(E,V)$ be  a coherent system   of type $(r,d,k)$ with $d>0$ and $0 < k < r$ defining an exact sequence as follows 
 $$ 0 \to V \otimes \OO_C \to E \to F \to 0,$$
 where  $F$ is a sheaf of depth one. 
 Let $\underline{e} \in \Ext^1(F, V \otimes \OO_C)$ be the  corresponding extension class.
 We have $\Ext^1(F,V \otimes \OO_C) \simeq V \otimes \Ext^1(F,\OO_C)$, so $\underline{e}= (\underline{e}_1, \dots, \underline{e}_k)$, 
 $\underline{e}_i \in \Ext^1(F,\OO_C)$. 
 
\begin{lemma}
\label{LEM:extension}
With the above notations, let $(E,V)$ be $\wa$-semistable, with $\alpha \leq \frac{d}{r-k}$,  then we have:
\begin{enumerate}
    \item{}  if  $\underline{e} = \underline{0}$ then $\alpha = \frac{d}{r-k}$; 
    \item{}   if $\alpha < \frac{d}{r-k}$, then $\underline{e}_1, \dots, \underline{e}_k$ are linearly independent in $\Ext^1(F, \OO_C)$. 
    In particular, $k \leq h^1(F^*).$
\end{enumerate}
\end{lemma}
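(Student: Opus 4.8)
The plan is to test $\wa$-semistability against coherent subsystems of $(E,V)$ obtained by splitting off trivial summands of $V\otimes\OO_C$. The engine of both parts is a single slope computation that I would record first: a proper coherent subsystem of type $(r-m,d,k-m)$ with $1\le m\le k$ — one carrying the full $\wdeg F=d$ but with $\wrank$ and number of sections each reduced by $m$ — satisfies $\mu_{\wa}(\cdot)\le\mu_{\wa}(E,V)$ if and only if $\alpha\ge\frac{d}{r-k}$, and this threshold is independent of $m$. This is a short cross-multiplication using $\mu_{\wa}(E,V)=\frac{d+\alpha k}{r}$.

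For (1), if $\underline e=\underline 0$ the extension splits, so $F$ is a direct summand of $E$ and $(F,0)$ is a proper coherent subsystem of type $(r-k,d,0)$, i.e. the case $m=k$ above (it is nonzero since $\wdeg F=d>0$, and proper since $k>0$). Semistability applied to $(F,0)$ gives $\frac{d}{r-k}=\mu_{\wa}(F,0)\le\mu_{\wa}(E,V)$, hence $\alpha\ge\frac{d}{r-k}$ by the computation; together with the standing hypothesis $\alpha\le\frac{d}{r-k}$ this forces $\alpha=\frac{d}{r-k}$.

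For (2), I would argue by contradiction: suppose $\underline e_1,\dots,\underline e_k$ are linearly dependent, so $k':=\dim\Span\{\underline e_1,\dots,\underline e_k\}<k$. Viewing $\underline e\in V\otimes\Ext^1(F,\OO_C)$, the minimal subspace $V_0\subseteq V$ with $\underline e\in V_0\otimes\Ext^1(F,\OO_C)$ has dimension $k'$; choosing a complement $V_1$ with $\dim V_1=m:=k-k'\ge 1$, the $V_1$-component of the extension class vanishes, so $E\simeq E_0\oplus(V_1\otimes\OO_C)$ with $0\to V_0\otimes\OO_C\to E_0\to F\to 0$. Then $(E_0,V_0)$ is a proper coherent subsystem of type $(r-m,d,k-m)$, and since $\alpha<\frac{d}{r-k}$ the computation yields $\mu_{\wa}(E_0,V_0)>\mu_{\wa}(E,V)$, contradicting semistability. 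Hence the $\underline e_i$ are linearly independent, and since $\dim\Ext^1(F,\OO_C)=h^1(F^*)$ (Proposition \ref{PROP:dualextension}) we conclude $k\le h^1(F^*)$.

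I expect the main obstacle to be the splitting $E\simeq E_0\oplus(V_1\otimes\OO_C)$ together with the verification that $(E_0,V_0)$ is a bona fide coherent subsystem. One must use the decomposition $\Ext^1(F,V\otimes\OO_C)=\bigl(V_0\otimes\Ext^1(F,\OO_C)\bigr)\oplus\bigl(V_1\otimes\Ext^1(F,\OO_C)\bigr)$ and the vanishing of the $V_1$-component to realise $E_0$ as a subsheaf of $E$ compatible with the inclusion $V\otimes\OO_C\hookrightarrow E$, and then to check $V_0\subseteq V\cap H^0(E_0)$ so that the pair $(E_0,V_0)$ is admissible. The slope inequalities themselves are routine once the correct subsystems have been singled out.
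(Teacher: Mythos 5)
Your proposal is correct and takes essentially the same route as the paper: part (1) is the paper's argument verbatim (test the direct summand $(F,0)$ against $\wa$-semistability and combine with $\alpha\le\frac{d}{r-k}$), and in part (2) your minimal-subspace base change producing the splitting $(E,V)\simeq(E_0,V_0)\oplus(V_1\otimes\OO_C,V_1)$ is exactly the paper's normalization $\underline e=(\underline e_1,\dots,\underline e_l,\underline 0,\dots,\underline 0)$. The only cosmetic difference is that you derive the contradiction by computing directly that $\mu_{\wa}(E_0,V_0)-\mu_{\wa}(E,V)=\frac{m\left(d-\alpha(r-k)\right)}{r(r-m)}>0$ for $\alpha<\frac{d}{r-k}$, while the paper packages the same computation as a convex-combination identity among the slopes of the two summands followed by a case analysis.
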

\begin{proof}
(1) Assume that  $\underline{e} = \underline{0}$, then  we have  
 $(E,V) \simeq (V \otimes \OO_C, V) \oplus (F,0)$.
 In particular, 
 as  $(E,V)$ is $\wa$-semistable, we must  have:
$$\frac{\wdeg(F)}{\wrank(F)} \leq \frac{d}{r} + \alpha \frac{k}{r},$$
that is 
$$\frac{d}{r-k} \leq \frac{d}{r} + \alpha \frac{k}{r},$$
which implies:
$\alpha \geq \frac{d}{r-k}$. Since we assumed $\alpha \leq \frac{d}{r-k}$, we get the equality. 
\hfill\par
(2) Assume that 
$\underline{e}_1, \dots, \underline{e}_k$ are linearly dependent. After a base change, we can assume that $\underline{e} = (\underline{e}_1,\dots, \underline{e}_l,\underline{0}, \dots\underline{0})$, with $1 \le l < k$ and $\underline{e}_1,\dots, \underline{e}_l$ linearly independent.
This implies that 
$$(E,V)  \simeq  (E_1,V_1) \oplus (V_2\otimes \OO_C,V_2),$$
with $\dim V_1= l$ and  $\dim V_2 = k-l$.
We have the following  relation between $\wa$-slopes: 
$$\mu_{\wa}(E,V) = \frac{r-(k-l)}{r} \mu_{\wa}(E_1,V_1) + \frac{k-l}{r}\mu_{\wa}(V_2\otimes \OO_C,V_2).$$ 
So we have the following cases:
\begin{enumerate}
    \item{} $\mu_{\wa}(E,V) =  \mu_{\wa}(E_1,V_1)=  \mu_{\wa}(V_2\otimes \OO_C,V_2),$
    \item{} either $(E_1,V_1)$ or $(V_2\otimes \OO_C,V_2)$ destabilizes $(E,V)$.
\end{enumerate}
     
In the first case we obtain $\alpha = \frac{d}{r-k}$, the second one cannot occur since $(E,V)$ is $\wa$-semistable. 
\end{proof}

\begin{lemma}
\label{LEM:alphaS}
Let $C$ be a nodal curve and let $\w$ be a good polarization. Let $r,d >0$ rational and $k$ an integer with $0 < k < r$. Then, there exists $ \alpha_S > \alpha_T$ and $\alpha_S < \frac{d}{r-k}$ such that if  $\alpha  \in (\alpha_S, \frac{d}{r-k})$, any $\wa$-semistable coherent system $(E,V)$ defines a BGN extension
\begin{equation}
     \label{BGN:EV}
0 \to V \otimes \OO_C \to E \to F \to 0,
\end{equation}
of type $(r,d,k)$ with $F$ ${\underline w}$-semistable and $h^0(F^*)=0$.
\end{lemma}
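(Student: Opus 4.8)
The plan is to observe that the previous lemmas already deliver almost the entire BGN structure, so that the only genuinely new point is the $\w$-semistability of $F$. Indeed, for $\alpha\in(\alpha_T,\tfrac{d}{r-k})$ Lemma \ref{LEM:alphaT} produces the exact sequence $0\to V\otimes\OO_C\to E\to F\to 0$ with $F$ of depth one; since $\wrank(V\otimes\OO_C)=k$ and $\wdeg(V\otimes\OO_C)=0$, additivity of $\wrank$ and $\wdeg$ forces $\wrank(F)=r-k$ and $\wdeg(F)=d$, and Lemma \ref{LEM:extension}(2) (valid as $\alpha<\tfrac{d}{r-k}$) shows that the components $\underline e_1,\dots,\underline e_k$ of the extension class are linearly independent in $\Ext^1(F,\OO_C)$. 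Hence $\underline e$ is a BGN extension of type $(r,d,k)$ as soon as $F$ is known to be $\w$-semistable, and once that is in hand $h^0(F^*)=0$ follows at once from Lemma \ref{LEM:h0dual}, because $\w$ is good and $\wdeg(F)=d>0$.

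To force $\w$-semistability I would argue by contradiction, exploiting the finiteness of critical values. Suppose $(E,V)$ is $\wa$-semistable while $F$ admits a subsheaf $G$ with $\mu_\w(G)>\mu_\w(F)=\tfrac{d}{r-k}$. I would first record that necessarily $s:=\wrank(G)<r-k$: a subsheaf of full multirank has torsion quotient, so $\wdeg(G)=\wdeg(F)-\wdeg(F/G)\le\wdeg(F)$ (the length of $F/G$ being nonnegative), contradicting $\mu_\w(G)>\mu_\w(F)$. Pulling $G$ back along the projection $\pi\colon E\to F$ and setting $E'=\pi^{-1}(G)$ gives $0\to V\otimes\OO_C\to E'\to G\to 0$, so $(E',V)$ is a proper coherent subsystem with $\wrank(E')=k+s$ and $\wdeg(E')=\wdeg(G)$, with all of $V$ still sitting in $H^0(E')$ since $V\otimes\OO_C\subseteq E'$.

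The heart of the matter is the slope comparison. A direct computation gives
$$\mu_{\wa}(E',V)-\mu_{\wa}(E,V)=\frac{\wdeg(G)}{k+s}-\frac{d}{r}+\alpha k\,\frac{r-k-s}{r(k+s)},$$
which is strictly increasing in $\alpha$ (as $s<r-k$) and vanishes exactly at
$$\alpha_0(G)=\frac{d(k+s)-r\,\wdeg(G)}{k(r-k-s)}.$$
Using $\wdeg(G)>\tfrac{sd}{r-k}$ together with the identity $d(k+s)-\tfrac{rsd}{r-k}=\tfrac{dk(r-k-s)}{r-k}$ one checks $\alpha_0(G)<\tfrac{d}{r-k}$. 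Moreover $\mu_\w(E')=\tfrac{\wdeg(G)}{k+s}\ne\tfrac{d}{r}=\mu_\w(E)$ (equality would force $s=r-k$), so whenever $\alpha_0(G)>0$ it is a bona fide critical value for type $(r,d,k)$ in the sense of Lemma \ref{LEM:pcritici}, and hence $\alpha_0(G)\le\alpha_L$. I would then set $\alpha_S=\max\{\alpha_T,\alpha_L\}$, replacing it by any strictly larger value below $\tfrac{d}{r-k}$ if it happens to equal $\alpha_T$, so that $\alpha_S>\alpha_T$. For $\alpha\in(\alpha_S,\tfrac{d}{r-k})$ one has $\alpha>\alpha_0(G)$ in every case (either $\alpha_0(G)\le\alpha_L\le\alpha_S<\alpha$, or $\alpha_0(G)\le 0<\alpha$), whence the displayed difference is strictly positive and $(E',V)$ destabilizes $(E,V)$, a contradiction. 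Therefore $F$ is $\w$-semistable.

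I expect the delicate point to be the recognition that $\alpha_0(G)$ is genuinely a critical value, which is what lets the \emph{finiteness} of Lemma \ref{LEM:pcritici} bound it uniformly, simultaneously over all destabilizers $G$ and all systems $(E,V)$, by the single constant $\alpha_L$. The alternative of bounding $\alpha_0(G)$ away from $\tfrac{d}{r-k}$ by hand is also available but heavier: it would require the discreteness of $\wdeg$ at fixed multirank together with the boundedness of the multiranks of subsheaves of $F$ (itself coming from $\rank(E_i)\le r/w_m$), and only then a uniform gap.
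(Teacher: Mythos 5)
Your proof is correct, and it diverges from the paper's at exactly the step you flagged as delicate. The two arguments share the structural move: pull the would-be destabilizer back along $E\to F$ to get a proper subsystem $(E',V)$ and test $\wa$-semistability against it (the paper does this via the same commutative diagram, for subsheaves $F'\subset F$ with depth-one quotient; you handle arbitrary $G$, disposing of $\wrank(G)=r-k$ via the torsion-quotient observation, a reduction the paper leaves implicit). From there the paper derives the inequality $\mu_{\w}(F')-\mu_{\w}(F)\le \tfrac{k(r-k-w_m)}{rw_m}\left(\tfrac{d}{r-k}-\alpha\right)$ and then redoes a discreteness count by hand: finitely many possible $\w$-ranks $s'$ (bounded via $w_m$), $\wdeg$ discrete at fixed $\w$-rank, hence finitely many values of $\mu_{\w}(F')-\mu_{\w}(F)$ in $[0,1]$, yielding a gap $q>0$ and the explicit threshold $\alpha_S=\tfrac{d}{r-k}-q\tfrac{rw_m}{k(r-k-w_m)}$. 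You instead solve for the exact equality point $\alpha_0(G)$, check $\alpha_0(G)<\tfrac{d}{r-k}$, recognize it (when positive) as a critical value, and invoke the finiteness from Lemma \ref{LEM:pcritici} to bound it uniformly by $\alpha_L$. Your route is shorter and conceptually cleaner, outsourcing the discreteness to Lemma \ref{LEM:pcritici} (whose proof is essentially the same counting the paper repeats here); what the paper's version buys is an $\alpha_S$ defined independently of $\alpha_L$, which is why Theorem \ref{THM:BGNext} has to distinguish the cases $\alpha_S<\alpha_L$ and $\alpha_S>\alpha_L$ — with your definition $\alpha_S\ge\alpha_L$ holds automatically and that case split evaporates. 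The remaining ingredients (additivity giving type $(r,d,k)$, Lemma \ref{LEM:extension}(2) for linear independence, Lemma \ref{LEM:h0dual} for $h^0(F^*)=0$) are used identically in both proofs.

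One local misstatement, harmless but worth fixing: your parenthetical claim that $\mu_{\w}(E')=\mu_{\w}(E)$ would force $s=r-k$ is false. For instance $(r,k,d)=(3,1,3)$ with $s=1$, $\wdeg(G)=2$ gives a numerical destabilizer ($\mu_{\w}(G)=2>\tfrac{3}{2}=\mu_{\w}(F)$) with $\mu_{\w}(E')=1=\mu_{\w}(E)$ and $s<r-k$. What equality actually forces is the vanishing of the numerator $d(k+s)-r\,\wdeg(G)$, i.e.\ $\alpha_0(G)=0$, which lands in your second branch $\alpha_0(G)\le 0<\alpha$. Equivalently, $\alpha_0(G)>0$ does imply $\mu_{\w}(E')\ne\mu_{\w}(E)$ (equal $\w$-slopes plus equal $\wa$-slopes at $\alpha_0>0$ would force $s=r-k$), so your two-case analysis is sound once the justification is stated in that conditional form.
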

\begin{proof}
Let $(E,V)$ be $\wa$-semistable with $\alpha > \alpha_T$. Then, 
by Lemma \ref{LEM:alphaT} and Lemma \ref{LEM:extension}, it defines a BGN extension with $F$ of depth one.  
We have to prove that $F$ is $\w$-semistable for $\alpha$ big enough. 
Let $F' \subset F$ be a  proper subsheaf of $F$ such that the quotient $Q= F /F'$ is a sheaf of depth one. 
We denote by $s' = \wrank(F')$ and $d' = \wdeg(F')$.
Then we have $w_m \leq s' < r-k$, where $w_m = \min\{w_1,\dots,w_{\gamma}\}$.
From the commutative diagram:
$$
\xymatrix{
V\otimes \OO_C \ar@{^{(}->}[r] \ar@{=}[d] &
    E' \ar@{->>}[r]\ar@{^{(}->}[d] &
    F' \ar@{^{(}->}[d] \\
V\otimes \OO_C \ar@{^{(}->}[r] &
    E \ar@{->>}[r]\ar@{->>}[d] &
    F \ar@{->>}[d] \\
 &
    Q\ar@{=}[r] & 
    Q
}
$$
we obtain a coherent system $(E',V)$ of type
$(s'+k,d',k)$ which is a proper subsystem of $(E,V)$.
Since this  is $\wa$-semistable we have:
$$\mu_{\wa}(E',V) \leq \mu_{\wa}(E,V),$$
that is
$$ \frac{d'}{k+s'} + \alpha\frac{k}{k+s'} \leq \frac{d}{r}+ \alpha\frac{k}{r}.$$
We can write the above inequality as  follows
$$ \frac{s'}{k+s'} \mu_{\underline w}(F') + \alpha\frac{k}{k+s'} \leq  \frac{s'}{k+s'}\mu_{\underline w}(F)  +  \left(\frac{r-k}{r} - \frac{s'}{k+s'}\right) \mu_{\underline w}(F) + \alpha\frac{k}{r}, $$
equivalently
$$ \frac{s'}{k+s'}(\mu_{\underline w}(F') - \mu_{\underline w}(F)) \leq \alpha \left(\frac{k}{r}-\frac{k}{k+s'}\right) + \frac{k(r-k-s')}{r(k+s')}\mu_{\underline w}(F),$$
as $s' > 0$ and  $k + s' > 0$,  we get
$$ (\mu_{\underline w}(F') - \mu_{\underline w}(F)) \leq \frac{k(r-k-s')}{rs'}(\mu_{\underline w}(F) - \alpha).$$
Note that $\mu_{\underline w}(F)= \frac{d}{r-k}$  and $s' \geq w_m > 0$, so we obtain:
$$ \mu_{\underline w}(F') - \mu_{\underline w}(F) \leq \frac{k(r-k-w_m)}{rw_m}\left(\frac{d}{r-k} - \alpha\right).$$
Let  $s' = \sum_{i=1}^{\gamma}w_i s_i'$,  as $s'< r-k$, then    we  have $0 \leq s_i'\leq (r-k)/w_m$.  In particular, 
for a fixed polarization $\underline w$, there are finitely many values  for $s'$. 
As $d'= \chi(F') - s' \chi(\OO_C)$,  the possible value of $d'$ lies in a discrete set whose intersection with any bounded subset is finite.  In particular, the possible values which the difference $\mu_{\underline w}(F') - \mu_{\underline w}(F)$ can assume in $[0,1]$ are finite.  
So there exists $q > 0$ such that 
$\mu_{\underline w}(F') - \mu_{\underline w}(F) \leq q $ implies $\mu_{\underline w}(F') - \mu_{\underline w}(F) \leq 0$. 
If we choose 
$$ \alpha > \frac{d}{r-k} - q \frac{rw_m}{k(r-k-w_m)},$$
then we have $$\mu_{\underline w}(F') - \mu_{\underline w}(F) \leq q 
$$
and so $\mu_{\w}(F') \leq \mu_{\w}(F).$
We set $\alpha_S = \frac{d}{r-k} - q \frac{rw_m}{k(r-k-w_m)}$. It is easy to see that $\alpha_S  > \alpha_T$. 
\vspace{2mm}

Finally, as $F$ is $\w$-semistable with $\wdeg(F) = d>0$ and $\w$ is good, by Lemma \ref{LEM:h0dual} we have that $h^0(F^*)=0$.
\end{proof}


The consequence of all previous technical result is the following Theorem. 

\begin{theorem}
\label{THM:BGNext}
Let $C$ be a nodal curve and $\w$ a good polarization on it.  
Let $r >0$ and  $d > 0 $ be rational numbers and   let $k$ be an integer with  $0 < k < r$. If $\GwL(r,d,k)$ is not empty, we have a morphism
$$\eta:\GwL(r,d,k) \to {\mathcal U}_{(C,\w)}(r-k,d)$$
sending a coherent system $(E,V)$ to $[\coker(\ev_V)]$.
\end{theorem}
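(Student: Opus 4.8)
The plan is to show that the assignment $(E,V)\mapsto[\coker(\ev_V)]$ is well defined and then promote it to a morphism of schemes by checking that it arises from a functorial construction on families. First I would verify well-definedness on points. By Lemma \ref{LEM:alphaS}, for $\alpha\in(\alpha_S,\frac{d}{r-k})$ every $\wa$-semistable coherent system $(E,V)$ of type $(r,d,k)$ sits in a BGN extension
$$0\to V\otimes\OO_C\to E\to F\to 0$$
with $F$ a depth one sheaf which is $\w$-semistable of $\w$-rank $r-k$ and $\w$-degree $d$, and $F=\coker(\ev_V)$. Since $\alpha_S>\alpha_L$, the moduli space $\GwL(r,d,k)$ is covered by such $\alpha$, so to each point $(E,V)$ we associate the well-defined $S$-equivalence class $[F]\in\cU_{(C,\w)}(r-k,d)$. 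I would note that $F$ depends only on $(E,V)$ up to isomorphism (the cokernel of the canonical evaluation map is canonical), so the set-theoretic map $\eta$ is unambiguous.

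The substance of the proof is showing $\eta$ is a morphism of varieties, not merely a map of points. For this I would use the universal/parametrizing property of the coarse moduli space $\GwL(r,d,k)$ coming from \cite{KN}: it suffices to exhibit, for an arbitrary family of $\wa$-stable coherent systems parametrized by a scheme $T$, a corresponding family of $\w$-semistable depth one sheaves of $\w$-rank $r-k$ and $\w$-degree $d$ on $C\times T$, inducing via the coarse moduli property of $\cU_{(C,\w)}(r-k,d)$ a morphism $T\to\cU_{(C,\w)}(r-k,d)$ compatible with $\eta$ on points. Concretely, given a family $(\mathcal{E},\mathcal{V})$ on $C\times T$ with evaluation map $\ev_{\mathcal{V}}\colon \mathcal{V}\otimes\OO_{C\times T}\to\mathcal{E}$, I would set $\mathcal{F}:=\coker(\ev_{\mathcal{V}})$ and argue that $\mathcal{F}$ is flat over $T$ and that its restriction to each fibre $C\times\{t\}$ is exactly $\coker(\ev_{V_t})=F_t$. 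Flatness follows because on each fibre the evaluation map is injective (Lemma \ref{LEM:alphaI}, applicable since $\alpha>\alpha_S>\alpha_I$), so the sequence $0\to\mathcal{V}\otimes\OO\to\mathcal{E}\to\mathcal{F}\to0$ stays exact after restriction to fibres and the Hilbert polynomial of $\mathcal{F}_t$ is constant; hence $\mathcal{F}$ is a flat family of depth one sheaves whose fibres are $\w$-semistable of the prescribed $\w$-rank and $\w$-degree.

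Given such a flat family $\mathcal{F}$, the coarse moduli property of $\cU_{(C,\w)}(r-k,d)$ yields a classifying morphism $\phi_T\colon T\to\cU_{(C,\w)}(r-k,d)$ sending $t\mapsto[\mathcal{F}_t]=[F_t]$. Applying this with $T=\GwL(r,d,k)$ itself (or, more carefully, to a local universal family on an étale/atlas cover, then descending) produces the desired $\eta$, and by construction $\eta(E,V)=[\coker(\ev_V)]$ on points. The main obstacle I anticipate is the passage from the pointwise map to a genuine scheme morphism: the coarse moduli space $\GwL(r,d,k)$ need not carry a universal family, so one cannot simply apply the parametrizing property to the identity. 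The clean way around this is to work with a local universal family on an atlas $U\to\GwL(r,d,k)$ (which exists since $\wa$-stable points have only scalar automorphisms and the moduli space is built via GIT in \cite{KN}), form $\mathcal{F}$ there, obtain $U\to\cU_{(C,\w)}(r-k,d)$, and then verify that this morphism is invariant under the equivalence relation defining $\GwL(r,d,k)$ so that it descends. Establishing that the cokernel construction respects this equivalence—equivalently, that two local families inducing the same point of $\GwL$ produce $S$-equivalent $F$—is the technical heart, and I would handle it by observing that isomorphic coherent systems have isomorphic evaluation cokernels, so the induced point of $\cU_{(C,\w)}(r-k,d)$ is unchanged.
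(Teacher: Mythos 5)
Your proposal is correct and follows essentially the same route as the paper's proof: well-definedness of $(E,V)\mapsto[\coker(\ev_V)]$ via Lemma \ref{LEM:alphaS} and Lemma \ref{LEM:extension}, combined with the constancy of $\wa$-stability on $(\alpha_L,\frac{d}{r-k})$ from Lemma \ref{LEM:pcritici} (note that your assertion $\alpha_S>\alpha_L$ is unjustified, but harmless, since Lemma \ref{LEM:pcritici} makes the argument work whichever of $\alpha_S$, $\alpha_L$ is larger, exactly as the paper's case distinction shows), followed by the same family argument: form $\mathcal{F}=\coker(\ev_{\mathcal{V}})$ on $C\times T$, use fibrewise injectivity of the evaluation map over a reduced irreducible base to get flatness, and invoke the coarse moduli property of $\mathcal{U}_{(C,\w)}(r-k,d)$. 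Your explicit atlas-and-descent discussion and Hilbert-polynomial justification of flatness merely spell out steps the paper leaves implicit.
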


\begin{proof}

First of all we point out that $\coker(\ev_V)$ does not depend on the isomorphism class of $(E,V)$. By Lemma \ref{LEM:extension} and Lemma \ref{LEM:alphaS} if $\alpha \in \left(\alpha_S, \frac{d}{r-k}\right)$ any   $\wa$-stable coherent system $(E,V)$ defines a BGN extension of type $(r,d,k)$, where  $F = \coker ev_V$  is a depth one sheaf  which is $\w$-semistable, with $\wrank(F) = r-k$ and $\wdeg(F) = d$. 
If $\alpha_S < \alpha_L$, then this holds for any 
    $(E,V) \in \GwL(r,d,k)$.
Let  $\alpha_S > \alpha_L$, then   for  any $(E,V) \in \GwL(r,d,k)$,   $\wa$-stability does not change if $\alpha $ varies in $(\alpha_L, \frac{d}{r-k})$ by Lemma \ref{LEM:pcritici}. As
Hence, we obtain that $\eta$  is well defined. 
\hfill\par
To prove that $\eta$ is  a morphism  we consider a family of $\wa$-stable coherent systems of type $(r,d,k)$ parametrized by a variety  $S$. 
In particular,  in our hypothesis, we have a coherent sheaf ${\mathcal E}$ on $S \times C$, which is  flat on $S$ such that $E_s = \mathcal E \otimes \cO_{s \times C}$  is a depth one sheaf, a vector bundle ${\mathcal V}$ on $S$ of rank $k$ and a  map of sheaves
$\xi \colon {\pi_S}^*\cV \to \mathcal E$, where $\pi_S \colon S \times C \to S$ is the projection.  Moreover, for any $s$
the map $\xi_{\vert s \times C} \colon V_s \otimes \cO_{s \times C} \to E_s$  is injective and its  cokernel $F_s$ is a $\w$-semistable depth one sheaf with $\wrank(F_s)=r-k$  and $\wdeg(F_s)=d$.  
If we assume that $S$ is irreducible and reduced, we  can prove that $\xi$ is an injective map of sheaves on $S \times C$.

This implies that  its cokernel ${\mathcal F}$ is a coherent sheaf on $S \times C$, which is flat on $S$ too and ${\mathcal F}_s = F_s$ defined as above.
So ${\mathcal F}$ defines a  family of 
$\w$-semistable sheaves of depth one of $\w$-rank $(r-k) $  and $\w$-degree $d$, parametrized by $S$. Hence we have a natural morphism $S \to {\mathcal U_{(C,\w)}}(r-k,d)$, sending $s \mapsto [F_s]$. This proves that $\eta$ is a morphism. 
\end{proof}

\begin{theorem}
\label{THM:cs}
Assume that we are in the same hypothesis of Theorem \ref{THM:BGNext}. Then the image of $\eta$ contains the subscheme $\mathcal{U}_{(C,\w)}^s(r-k,d)$ parametrizing $\w$-stable sheaves and the fiber of $\eta$ over $F\in\mathcal{U}_{(C,\w)}^s(r-k,d)$ is $\Gr(k,H^1(F^*))$.
\end{theorem}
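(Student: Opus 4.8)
The plan is to prove the two assertions of Theorem \ref{THM:cs} separately, both relying heavily on the BGN-extension machinery established in Proposition \ref{PROP:dualextension} together with the morphism $\eta$ constructed in Theorem \ref{THM:BGNext}. First I would address the claim about the image: given a $\w$-stable sheaf $F$ with $\wrank(F)=r-k$ and $\wdeg(F)=d>0$, I want to exhibit at least one $\wa$-stable coherent system $(E,V) \in \GwL(r,d,k)$ with $[\coker(\ev_V)]=F$. Since $\w$ is good and $\wdeg(F)=d>0$, Lemma \ref{LEM:h0dual} gives $h^0(F^*)=0$, and since $F$ is $\w$-stable its slope is positive, so $h^1(F^*)=h^0(F\otimes\omega_C)$ is large enough that $\Gr(k,H^1(F^*))\neq\emptyset$ for our range of $k$; picking any $k$ linearly independent classes $\underline{e}_1,\dots,\underline{e}_k\in\Ext^1(F,\OO_C)\simeq H^1(F^*)$ produces, via Proposition \ref{PROP:dualextension}, a BGN extension and hence a coherent system $(E,V)$ of type $(r,d,k)$ with $\ev_V$ injective and cokernel $F$.

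The crux of this direction is to check that such $(E,V)$ is $\wa$-stable for $\alpha$ just below $\frac{d}{r-k}$, so that it genuinely lies in $\GwL(r,d,k)$ and not merely in the semistable locus. I would argue that any proper coherent subsystem $(E',V')$ of $(E,V)$ induces, after intersecting with $V\otimes\OO_C$ and projecting to $F$, a subsheaf $F'\subseteq F$ and a subspace of $V$; using $\w$-stability of $F$ one bounds $\mu_{\w}(F')<\mu_{\w}(F)=\frac{d}{r-k}$ strictly, and the linear independence of the extension classes prevents the trivial splitting (by Lemma \ref{LEM:extension}(2)) from causing equality. The hard part will be organizing the slope computation so that strict inequality $\mu_{\wa}(E',V')<\mu_{\wa}(E,V)$ holds for all subsystems simultaneously as $\alpha\to\frac{d}{r-k}^{-}$; I expect the dominant term to come from the destabilizing contribution of $V\otimes\OO_C$, which is controlled precisely by Lemma \ref{LEM:posdegree} and the choice $\alpha<\frac{d}{r-k}$.

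For the fiber computation, I would show that the map sending a BGN extension class in $\Gr(k,H^1(F^*))$ to the isomorphism class of its associated coherent system $(E,V)$ is a bijection onto the fiber $\eta^{-1}(F)$. Surjectivity is essentially the content of Lemma \ref{LEM:alphaS} and Lemma \ref{LEM:extension}: every $(E,V)\in\GwL(r,d,k)$ with $\coker(\ev_V)\cong F$ arises as a BGN extension, and by Proposition \ref{PROP:dualextension}(c) these are classified by $\Gr(k,H^1(F^*))\simeq\Gr(k,H^0(F\otimes\omega_C)^*)$. For injectivity I use that two extension classes define isomorphic coherent systems (as pairs, respecting $V$) if and only if they differ by the $\GL(k)$-action already quotiented out in forming the Grassmannian, together with an automorphism of $F$; since $F$ is $\w$-stable, $\Aut(F)=\C^*$ acts as scalars and does not change the point of $\Gr(k,H^1(F^*))$.

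The main obstacle, I anticipate, is the strict stability check in the first paragraph: semistability of the BGN extension is comparatively automatic, but ensuring $\wa$-\emph{stability} for the limiting range $(\alpha_L,\frac{d}{r-k})$ requires ruling out subsystems that achieve equality in the slope inequality. I expect to handle these boundary cases by invoking the finiteness of critical values (Lemma \ref{LEM:pcritici}) to reduce to a single representative $\alpha$, and then using the strictness of $\w$-stability of $F$ to convert each potential equality into a strict inequality. A secondary subtlety is that the fiber description presupposes $F$ is $\w$-stable (so that $\Aut(F)$ is just scalars); for merely $\w$-semistable $F$ the fiber could be more complicated, which is exactly why the theorem restricts the fiber statement to $\mathcal{U}^s_{(C,\w)}(r-k,d)$.
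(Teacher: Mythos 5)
Your proposal follows essentially the same route as the paper's proof: construct a BGN extension of the $\w$-stable sheaf $F$, verify $\wa$-stability for $\alpha$ close to $\frac{d}{r-k}$ by splitting proper subsystems according to their image in $F$ (with $\w$-stability of $F$ providing a uniform slope gap over the finitely many relevant slope values, and linear independence of the extension classes ruling out the splitting/equality case), then identify the fiber via Proposition \ref{PROP:dualextension}(c). The only slips are cosmetic: the control on subsheaves coming from $V\otimes\OO_C$ is obtained in the paper from $\w$-semistability of $V\otimes\OO_C$ (a consequence of $\w$ being good), not from Lemma \ref{LEM:posdegree}, while your injectivity remark using $\Aut(F)=\C^*$ for $\w$-stable $F$ is a correct detail the paper leaves implicit.
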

\begin{proof}
Let $F$ be a $\w$-stable sheaf of depth one on $C$  with  $\wrank(F)=r-k$ and $\wdeg(F)=d$. Let    ${\underline e} \in \Ext^1(F,V \otimes \OO_C)$ be the class of a BGN  extension of type $(r,d,k)$: 
\begin{equation}
\label{ext1}
0 \to V \otimes \OO_C \to E \xrightarrow{\beta} F \to 0.
\end{equation}
Let  $(E,V)$ be  the  coherent system on $C$ defined by ${\underline e}$.
We will prove that $(E,V)$ is $\wa$-stable for $\alpha\in (\alpha_L,d/(r-k))$.
At this end,  let $(E',V')$  be a proper coherent subsystem  of $(E,V)$ with 
$V' = V \cap H^0(E')$, it is enough to see that 
$$\mu_{\wa}(E',V') < \mu_{\wa}(E,V),$$
for $\alpha$  sufficiently  close to $\frac{d}{r-k}$. 
For this reason, we set $\alpha = \frac{d -\epsilon}{r-k}$, with $\epsilon > 0$. \hfill\par

We consider the restriction of $\beta$ of \eqref{ext1} to $E'$: $\beta|_{E'} \colon E' \to F$.  
We  will distinguish  two cases  depending on  the fact the $\beta|_{E'}$ is the zero map.
\hfill\par
{\bf Case (a)}: $\beta|_{E'} = 0$.  Then $E'$  is a  nonzero subsheaf of $V \otimes \OO_C$ with  $\wrank(E')= r' \leq k < r$. As $\w$ is good we have that
$V \otimes \OO_C$ is ${\underline w}$-semistable (see \cite{BFPol}),  then we have:
$$\mu_{\underline w}(E') \leq \mu_{\underline w}(V \otimes \OO_C) = 0,$$
which implies $ \wdeg(E') = d'  \leq 0$.
Let  $k' = \dim V'$, then $0 \leq k' \leq k$.  If $k' > 0$, since $V' \subseteq V$, then we have an injective map $V' \otimes \OO_C \to E'$ which implies $k' \leq r' \leq k < r$. 
So we have:
$$ \mu_{\wa}(E',V')= \frac{d'}{r'} + \alpha\frac{k'}{r'} \leq \alpha.$$
As  we have chosen $\alpha= \frac{d-\epsilon}{r-k}$, it follows that
$$\mu_{\wa}(E,V) = \frac{d}{r} + \alpha \frac{k}{r}= \alpha + \frac{\epsilon}{r} > \alpha.$$
These two inequalities  allow us to   conclude: 
$$\mu_{\wa}(E',V') \leq \alpha < \mu_{\wa}(E,V).$$

{\bf Case (b)}: $\beta_{\vert E'} \neq 0$.
Then we set  $G' = \Ker (\beta_{\vert E'})$  and 
$F'= \Ima (\beta_{\vert E'})$.  They  are  depth one sheaves on $C$, $G'$ is a subsheaf of $V \otimes \OO_C$ and 
$F'$ is a non zero  subsheaf of $F$,   
they  fit into the following exact sequence:
$$0 \to G' \to E' \xrightarrow{\beta{|_E'}}
F' \to 0.$$
We distinguish two cases depending on the fact that $F'$   coincide with  $F$ or not. 
\hfill\par
{\bf (b1)}: $F'$ is a proper subsheaf  of $F$. 
Let $S_F= \{ \mu_{\w}(F'')\,|\, F'' \mbox{ proper subsheaf of } F \}$. Since $F$ is $\w$-stable,  the intersection $S_F \cap (0, \mu_{\w}(F))$ is finite.   
 So    we  can choose $\delta >0$  such that for any proper 
subsheaf $F'$ of $F$ we have:
$$\mu_{\underline w}(F') \leq \mu_{\underline w}(F) -\frac{\delta}{r-k}.$$ 
We set
$$\wrank(G') = l' \quad   \wrank(E') = r' \quad     \wrank(F') =  r'-l',$$
with $ 0 \leq l' < r'$,  and 
$$\wdeg(E')=  d' = \wdeg(G') + \wdeg(F').$$
As $G' \subseteq V \otimes \OO_C$ and $\w$ is good, by $\w$-semistability of $V \otimes \OO_C$  we have
$\wdeg(G') \leq 0$, hence $\wdeg(E') \leq \wdeg(F')$.
So we have:
\begin{equation}
\label{eq:b1}
\mu_{\underline w}(E') = \frac{\wdeg(E')}{r'} \leq 
\frac{\wdeg(F')}{r'} \leq \frac{r'-l'}{r'} \mu_{\underline w}(F') \leq \frac{r'-l'}{r'}\left[\mu_{\underline w}(F) - \frac{\delta}{r-k}\right].
\end{equation}
We set  $k' = \dim V'$,  as $V' \subseteq V$, then  we have an injective map  $V' \otimes \OO_C  \to G'$, so $k' \leq l'< r'$.
Finally, $(E',V')$  is a  proper coherent subystem of $(E,V)$ of type $(r',d',k')$. 
By using Equation \eqref{eq:b1} we have:
$$\mu_{\wa}(E',V') - \mu_{\wa}(E,V)= \mu_{\underline w}(E')  - \mu_{\underline w}(E) + \alpha\left(\frac{k'}{r'} - \frac{k}{r}\right) \leq $$
$$\leq \frac{r'-l'}{r'}\left[\mu_{\underline w}(F) - \frac{\delta}{r-k}\right] - \mu_{\underline w}(F)\frac{r-k}{r} + \alpha\left(\frac{k'}{r'}-\frac{k}{r}\right) \leq$$
$$ \leq \mu_{\underline w}(F)\frac{k'-l'}{r'}  - \frac{\delta}{r-k}\left(1 - \frac{l'}{r'}\right)+  \frac{\epsilon}{r-k}\left(\frac{k}{r}-\frac{k'}{r'}\right).$$
Since  $k' \leq l'$ and $\mu_{\w}(F)\geq 0$ by assumption, we have $\mu_{\w}(F)\frac{k'-l'}{r'} \leq 0$; as $l' <r'$, then 
$-\frac{\delta}{r-k}(1 - \frac{l'}{r'})< 0$.
Note that if $\frac{k}{r}-\frac{k'}{r'} \leq 0$, then $\frac{\epsilon}{r-k}(\frac{k}{r}-\frac{k'}{r'}) \leq 0$ for any $\epsilon >0$.
If $\frac{k}{r}-\frac{k'}{r'} > 0$, then $\frac{k'}{r'}$ can assume finitely many values, hence we can find $\epsilon >0$ small enough in order to obtain
$$ \mu_{\wa}(E',V') < \mu_{\wa}(E,V).$$
{\bf (b2)}: $F' = F$.  Let $\wrank (G') = l' \leq k$ and $\wdeg(G') = m'$. Then $\wrank(E') = r-k+l'$. 
As in case $(b1)$ we have $\wdeg(G') \leq 0$,   so we we  have:
$$ \wdeg(E') = \wdeg(G') + \wdeg(F) = \wdeg(F) + m',$$
which implies 
\begin{equation}
\label{EQ:mueprimo}
\mu_{\underline w}(E') =  \mu_{\underline w}(F)\frac{r-k}{r-k+l'} + \frac{m'}{r-k+l'}.
\end{equation}
Let   $k' = \dim V'$. As in case (b1) we have $k' \leq l'$. So we have a  proper coherent subsystem  of $(E,V)$. 
By using Equation \eqref{EQ:mueprimo} we have
$$\mu_{\wa}(E',V') - \mu_{\wa}(E,V) = \mu_{\underline w}(E')  - \mu_{\underline w}(E) + \alpha \left(\frac{k'}{r-k+l'}- \frac{k}{r}\right) \leq $$
$$\leq \mu_{\underline w}(F) \frac{r-k}{r-k+l'} - \mu_{\underline w}(F) \frac{r-k}{r} + \frac{d-\epsilon}{r-k} \left(\frac{k'}{r-k+l'}- \frac{k}{r}\right) + \frac{m'}{r-k+l'} \leq$$
$$  \leq \mu_{\underline w}(F) \frac{k'-l'}{r-k+l'} + \frac{\epsilon}{r-k}
\left(\frac{k}{r} - \frac{k'}{r-k-l'}\right) + \frac{m'}{r-k+l'}.$$
Note that $\mu_{\underline w}(F) \frac{k'-l'}{r-k+l'} +\frac{m'}{r-k+l'} \leq 0$ and it is zero if and only if $k'= l'$ and $m'= 0$. Assume that we are in this case. Then, the injection $V' \otimes \OO_C \to G'$ is an isomorphism $G' \simeq V' \otimes \OO_C$.  Let $V = V' \oplus V''$,  then    the quotient ${E \over E'} \simeq V''  \otimes \OO_C$, hence $E \simeq  E' \oplus (V'' \otimes \OO_C)$ and then Extension \eqref{ext1} would splits. But  this is impossible since the vectors $\underline{e}_1,\dots,\underline{e}_k$ are linearly independent in $\Ext^1(F,\OO_C)$.
\hfill\par
We can conclude, as in the case (b1), that $\mu_{\underline w}(F) \frac{k'-l'}{r-k+l'} +\frac{m'}{r-k+l'} < 0$, so for $\epsilon$ small enough we have:
$$\mu_{\wa}(E',V') < \mu_{\wa}(E,V)$$
so $(E,V)$ is $\wa$-stable as claimed.
\vspace{2mm}

The claim about the fiber over points of $\mathcal{U}_{(C,\w)}^s(r-k,d)$ follows by Proposition \ref{PROP:dualextension}(c).

\end{proof}

\section{On the moduli space $\GwL(r\cdot \underline{1},d,k)$} 
\label{SEC:4}

Let $C$  be a nodal curve with  $\gamma$ smooth irreducible components $C_i$  of genus $g_i \geq 2$ and $\delta $ nodes. Let $\w$ be a good polarization on $C$. In this section we will consider coherent systems  $(E,V)$ of type $(r,d,k)$ with multirank $r\cdot \underline{1}$, $k<r$ and $d>0$. This case is interesting as it includes coherent systems with $E$ locally free. Coherent systems with such features which are $\wa$-stable are parametrized by the subscheme $\Gwa(r\cdot \underline{1},d,k)$ of the moduli space  $\Gwa(r,d,k)$.
Hence, we can consider the map $\eta$ defined in Theorem \ref{THM:BGNext} and restrict it to the subscheme $\GwL(r\cdot \underline{1},d,k)$,   we obtain a   morphism

\begin{equation}
\psi:\GwL(r\cdot \underline{1},d,k)\to \U_{(C,\w)}((r-k)\cdot \underline{1},d).
\end{equation}

The moduli spaces $\U_{(C,\w)}(s\cdot \underline{1},d)$ have been described by Teixidor i Bigas (see \cite{Tei91}  and \cite{Tei95}). We briefly recall the most relevant results. First of all,  for any integers  $s \geq 1$ and $d$, the moduli space $\U_{(C,\w)}(s\cdot \underline{1},d)$ is never irreducible but it is  connected and each irreducible component has dimension  $1+s^2(p_a(C)-1)$. The generic element of each irreducible component is the isomorphism class of a locally free sheaf $F$, which is $\w$-stable and  whose restrictions to $C_i$ are stable too. 
Each component is identified by a $\gamma$-uple $(d_1,\dots,d_{\gamma})$, where $d_i$ is the degree of the restriction to $C_i$ of the generic element and $\sum_{i=1}^{\gamma}d_i=d$. We will denote  by $X_{d_1,\dots,d_\gamma}$ the component  of $\U_{(C,\w)}(s\cdot \underline{1},d)$ corresponding to  $(d_1,\dots,d_{\gamma})$. The intersection of two such components consists of sheaves which are not locally free.  Finally, for $\w$ general, the number of irreducible components is $h\cdot s^{\gamma-1}$ where $h$ is the number of spanning tree in the dual graph of $C$.
\vspace{2mm}

The main result of this section is the following:
 
\begin{theorem}
\label{THM:main}
Let $(C,\w)$  be a polarized nodal curve with $\w$ good. Let $0 < k < r$  and $d >0$  integers. Then the following hold:
\begin{enumerate}[(a)]
\item the moduli space $\GwL(r\cdot \underline{1},d,k)$ is non empty if and only if  $k \leq  \frac{d + r(p_a(C)-1)}{p_a(C)}$;
\item for any irreducible component $X_{d_1,\dots,d_{\gamma}} \subset \U_{(C,\w)}((r-k)\cdot \underline{1},d)$ we have an irreducible component $Y_{d_1,\dots,d_{\gamma}} \subset \GwL(r\cdot \underline{1},d,k)$ which is birational to a Grassmanian fibration over $X_{d_1,\dots,d_{\gamma}}$;
\item any component $Y_{d_1,\dots,d_{\gamma}}$ has dimension $\beta(r,d,k)$ and the generic  element is a coherent system $(E,V)$ with  $E$ locally free;
\item the above components are the only ones
which contains coherent systems $(E,V)$ with $E$ locally free. 
\end{enumerate}
\end{theorem}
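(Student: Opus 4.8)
The plan is to study the morphism $\psi\colon\GwL(r\cdot\underline{1},d,k)\to\U_{(C,\w)}((r-k)\cdot\underline{1},d)$ fibrewise over the Teixidor components $X_{d_1,\dots,d_\gamma}$, using Theorem \ref{THM:cs} to describe the fibres and Proposition \ref{PROP:Petri} to control dimension and smoothness at locally free points.

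For (a), I would first observe that any $(E,V)$ in the moduli space produces, by Lemma \ref{LEM:alphaS} and Theorem \ref{THM:BGNext}, a BGN extension with $F=\coker(\ev_V)$ of $\w$-rank $r-k$, $\w$-degree $d$, $\w$-semistable, and with the $k$ extension classes linearly independent in $\Ext^1(F,\OO_C)\simeq H^1(F^*)$, so $k\le h^1(F^*)$ (Lemma \ref{LEM:extension}). Since $\w$ is good and $\wdeg(F)=d>0$, Lemma \ref{LEM:h0dual} gives $h^0(F^*)=0$, whence $h^1(F^*)=-\chi(F^*)=-\wdeg(F^*)+(r-k)(p_a(C)-1)$. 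As $F$ has constant multirank $(r-k)\cdot\underline{1}$, Lemma \ref{LEM:wdegree} yields $\wdeg(F)+\wdeg(F^*)=\Delta_{\w}(F)+\Delta_{\w}(F^*)\ge 0$, so $\wdeg(F^*)\ge -d$ and hence $k\le h^1(F^*)\le d+(r-k)(p_a(C)-1)$, which rearranges to $k\le (d+r(p_a(C)-1))/p_a(C)$. Conversely, given the inequality I would pick (Teixidor) a $\w$-stable locally free $F$ with stable restrictions in some $X_{d_1,\dots,d_\gamma}$; then $h^1(F^*)=d+(r-k)(p_a(C)-1)\ge k$ (the degree bound is an equality since $F$ is locally free), so $\Gr(k,H^1(F^*))\ne\emptyset$ and Theorem \ref{THM:cs} produces a point of $\GwL$.

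For (b) and (c): over the dense open subset $X^0\subset X_{d_1,\dots,d_\gamma}$ of locally free $\w$-stable sheaves with stable restrictions, Theorem \ref{THM:cs} identifies the fibre of $\psi$ with $\Gr(k,H^1(F^*))$, and $h^1(F^*)=d+(r-k)(p_a(C)-1)$ is constant there. Cohomology and base change (using $h^0(F^*)=0$) give a vector bundle $\mathcal H$ with fibres $H^1(F^*)$ on $X^0$ (after an étale cover/twist where a universal $F$ exists), and $\Gr(k,\mathcal H)\to X^0$ is a Grassmann bundle mapping to $\GwL$ by $W\mapsto(E,V)$; by the fibre description this map is injective over $X^0$, hence birational onto its image, whose closure I define to be $Y_{d_1,\dots,d_\gamma}$. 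A direct count gives $\dim\Gr(k,\mathcal H)=[1+(r-k)^2(p_a(C)-1)]+k\big(d+(r-k)(p_a(C)-1)-k\big)=\beta_C(r,d,k)$. Its generic member has $F$ locally free, hence $E$ locally free by Proposition \ref{PROP:dualextension}(a) and $\ev_V$ injective, so by Proposition \ref{PROP:Petri} $\GwL$ is smooth of dimension $\beta_C(r,d,k)$ there; being irreducible of this dimension through a smooth point, $Y_{d_1,\dots,d_\gamma}$ is an irreducible component, proving (b) and (c).

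For (d), which I expect to be the hard part, let $(E,V)\in\GwL$ with $E$ locally free. Then $F=\coker(\ev_V)$ is locally free and $\w$-semistable by Proposition \ref{PROP:dualextension}(a), so $[F]$ lies in a unique Teixidor component $X_{d_1,\dots,d_\gamma}$ (intersections of components contain only non-locally-free sheaves), with $d_i=\deg(F_i)$; and $(E,V)$ is a smooth point of $\GwL$ on a unique component $Z$ of dimension $\beta_C(r,d,k)$ by Proposition \ref{PROP:Petri}. I would then deform $F$ within $X_{d_1,\dots,d_\gamma}$, through locally free sheaves, to the generic ($\w$-stable, stable-restriction) stratum: choose an irreducible curve $T\to X_{d_1,\dots,d_\gamma}$ through $[F]$ with generic point in $X^0$, together with a relative sheaf $\mathcal F$ on $T\times C$. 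Since $h^0(F_t^*)=0$ for all $t$ (Lemma \ref{LEM:h0dual}), $h^1(F_t^*)$ is constant and $R^1\pi_*\mathcal F^*$ is locally free; a section of the associated Grassmann bundle through the point $W_0\in\Gr(k,H^1(F^*))$ defining $(E,V)$ yields a family of BGN extensions $(E_t,V_t)\in\GwL$ with $(E_0,V_0)=(E,V)$ and $(E_t,V_t)\in Y_{d_1,\dots,d_\gamma}$ for generic $t$. Hence $(E,V)\in Y_{d_1,\dots,d_\gamma}$, and since $Z$ and $Y_{d_1,\dots,d_\gamma}$ are both components through the smooth point $(E,V)$, they coincide. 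The main obstacle is precisely this deformation step: producing $(E_t,V_t)$ as an honest family of coherent systems inside the $\wa$-stable locus for $\alpha\in(\alpha_L,d/(r-k))$ — i.e. controlling the relative extension classes and the constancy of $h^1$ so that a strictly semistable or non-generic locally free $F$ can be pushed into the generic stratum while dragging $(E,V)$ into $Y_{d_1,\dots,d_\gamma}$.
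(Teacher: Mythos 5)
Your treatment of (a)--(c) follows the paper's proof in all essentials: the fibrewise analysis of $\psi$ over $X^s_{d_1,\dots,d_\gamma}$ via Theorem \ref{THM:cs}, the dimension count $1+(r-k)^2(p_a(C)-1)+k(N-k)=\beta_C(r,d,k)$ with $N=d+(r-k)(p_a(C)-1)$, and smoothness at locally free points via Proposition \ref{PROP:Petri}. One intermediate step in your (a) is wrong, though harmlessly so: the identity $\wdeg(F)+\wdeg(F^*)=\Delta_{\w}(F)+\Delta_{\w}(F^*)$ fails for non-locally-free $F$, since the restrictions of $F^*$ pick up degree shifts at the nodes (already for $F=\nu_*\cO_{C^\nu}$ one has $\deg((F^*)_i)=-1$ while $\deg(F_i)=0$, so $\sum_i\deg(F_i)+\sum_i\deg((F^*)_i)\neq 0$). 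The detour is unnecessary: as in the paper, Serre duality gives $h^1(F^*)=h^0(F\otimes\omega_C)=\chi(F\otimes\omega_C)=d+(r-k)(p_a(C)-1)$ \emph{exactly}, for every $\w$-semistable $F$ of multirank $(r-k)\cdot\underline{1}$ and $\wdeg(F)=d>0$ (using Lemma \ref{LEM:h0dual} and Lemma \ref{LEM:wdegree}(b)), and the ``only if'' direction of (a) follows at once from $k\leq h^1(F^*)$.

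For (d) you take a genuinely different route from the paper, and the gap you flag yourself is real. The paper argues by contradiction with a parameter count: a putative extra component $Y$ containing a locally free point is smooth of dimension $\beta_C(r,d,k)$ there; its generic $F=\coker(\ev_V)$ must be $\w$-semistable but \emph{not} $\w$-stable (otherwise $Y$ meets $\psi^{-1}(\cU^s)$ and coincides with some $Y_{d_1,\dots,d_\gamma}$); subtracting the Grassmannian fibre dimension shows the set $\cW$ of such $F$ has $1+(r-k)^2(p_a(C)-1)$ moduli; boundedness of the restrictions $\cW_i$ (at most $1+(r-k)^2(g_i-1)$ parameters each, by \cite{BGN97}) together with the count of gluing data at the nodes forces $\cW_i$ to contain all stable bundles of rank $r-k$ and degree $d_i$, and then Teixidor's theorem makes the general glued $F$ $\w$-stable --- a contradiction. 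Your deformation argument instead needs, at the flagged step, that the \emph{given} strictly $\w$-semistable locally free $F$ is a flat limit, with constant multidegree, of $\w$-stable bundles with stable restrictions. This is not supplied by choosing a curve $T$ through $[F]$ in the coarse moduli $X_{d_1,\dots,d_\gamma}$: the points of $X_{d_1,\dots,d_\gamma}$ are $S$-equivalence classes, a family over $T$ hitting $[F]$ need not have special fibre the sheaf $F$ itself (the associated graded of $F$ need not even be locally free, and $[F]$ may lie on intersections of components, so your uniqueness claim is also suspect for strictly semistable $F$). Repairing this requires an irreducibility/density statement for the locus of semistable locally free sheaves of fixed multidegree at the level of families (e.g.\ via the Quot scheme dominating the moduli space), which is not among the results quoted in the paper; the paper's counting argument avoids the issue entirely. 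Granting that input, the remainder of your argument --- local freeness of $R^1\pi_*\mathcal{F}^*$ from $h^0(F_t^*)=0$, a section of the Grassmann bundle through $W_0$, openness of $\wa$-stability, and uniqueness of the component through the smooth point $(E,V)$ --- is correct and would yield (d).
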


\begin{proof}
For simplicity, we set $\cU=\U_{(C,\w)}((r-k)\cdot \underline{1},d)$ and $\cU^s=\U_{(C,\w)}^s((r-k)\cdot \underline{1},d)$.
\hfill\par

Let $F \in \cU^s$, 
by Theorem \ref{THM:cs},  we have that $\psi^{-1} (F) \simeq  \Gr(k,H^0(F \otimes \omega_C)^*)$. 
Hence 
$$\dim \psi^{-1}(F) = k(h^0(F \otimes \omega_C)-k).$$
By Serre duality  and Lemma \ref{LEM:h0dual} we have
$h^1(F \otimes \omega_C) = h^0(F^*)= 0$.
 So 
 $$h^0(F \otimes \omega_C) = \chi(F \otimes \omega_C)= \wdeg(F \otimes {\omega}_C) + (r-k)(1-p_a(C)).$$ 
Since $\omega_C \otimes \cO_{C_i}= \omega_{C_i}(\Delta_i)$,  then by Lemma \ref{LEM:wdegree} we obtain
$$\wdeg(F \otimes {\omega}_C)= \Delta_{\w}(F) + \sum_{i=1}^{\gamma}[d_i + (r-k)(2g_i-2 + \delta_i)]=
d + 2(r-k)(p_a(C) -1)$$
so $h^0(F \otimes \omega_C) = d + (r-k)(p_a(C) -1)$ and
$$\dim \ \psi^{-1}(F) = k [d +(r-k)(p_a(C) -1) -k].$$

In particular, this proves $(a)$: the moduli space $\GwL(r\cdot \underline{1},d,k)$ is non empty if and only if 
$$k \leq  \frac{d + r(p_a(C)-1)}{p_a(C)}.$$
In this case, since $\cU^s$ is an open dense subset of $\cU$, we can conclude that $\psi$ is dominant.
\hfill\par
Let  $X_{d_1,\dots,d_{\gamma}} \subset 
\cU$ be the irreducible component corresponding to $(d_1,\dots,d_{\gamma})$.  We denote by 
$X^s_{d_1,\dots,d_{\gamma}}$ its open subset  corresponding to $\w$-stable sheaves.
Then $\psi^{-1} (X^s_{d_1,\dots,d_{\gamma}})$ is an irreducible quasi-projective variety: in fact   it  is a Grassmannian fibration 
over     $X^s_{d_1,\dots,d_{\gamma}}$, with fibers
$ \Gr(k,N)$ with $N=d+(r-k)(p_a(C)-1)$. Hence we have
$$\dim(\psi^{-1} (X^s_{d_1,\dots,d_{\gamma}})) =1 + (r-k)^2(p_a(C) -1) +k(N-k).$$

Let $(E,V)$ be a generic element of $\psi^{-1} (X^s_{d_1,\dots,d_{\gamma}})$.
Then $E$ is locally free and the evaluation map $ev_V$  is injective. By Proposition \ref{PROP:Petri}, it follows that the moduli space $\GwL(r\cdot \underline{1},d,k))$ is smooth at the point $(E,V)$ with dimension 
$\beta_C(r,d,k)$.  Actually, it is immediate to check that
$$\beta_C(r,d,k)= \dim (\psi^{-1} (X^s_{d_1,\dots,d_{\gamma}})).$$
This allow us to conclude that the closure of $\psi^{-1} (X^s_{d_1,\dots,d_{\gamma}})$ in $\GwL(r\cdot \underline{1},d,k)$ is an irreducible component of $\GwL(r\cdot \underline{1},d,k)$: we denote it by $Y_{d_1,\dots, d_{\gamma}}$.
\vspace{2mm}

We claim now that the above components are the only ones containing coherent systems $(E,V)$ with $E$ locally free. 
By contradiction, assume that there exists an irreducible  component $Y \subset \GwL(r\cdot \underline{1},d,k)$, such that    $Y \not= Y_{d_1,\cdots,d_{\gamma}}$ and  $Y$ contains $(E_0,V_0)$ with $E_0$ locally free.
Note that by Proposition \ref{PROP:Petri}, $(E_0,V_0)$ is a smooth point of the moduli space (and then of $Y$). This implies $\dim(Y)=\beta(r,d,k)$.

Consider the irreducible bounded set $\cW=\{\coker(\ev_V)\,|\, (E,V)\in Y\}$. 
By assumption,  for the general $(E,V)\in Y$ we have that $\coker(\ev_V)$  is $\w$-semistable but not $\w$-stable. As $\w$-stability is an open property, we have that all $F\in \cW$ are $\w$-semistable but not $\w$-stable.
We recall that BGN extensions of $F \in \cW$ are parametrized by the Grassmannian variety $\Gr(k,H^0(F\otimes \cO_C)^*)$.
By Lemma \ref{LEM:h0dual} we have $h^1(F\otimes \omega_C)=0$ for all $F\in \cW$, so $\dim \Gr(k,H^0(F\otimes \cO_C)^*)=k(N-k)$ with $N$ as above. 

Moreover, since $\wa$-stability is an open condition, for any $F \in \cW$ there is an open subset of $\Gr(k,H^0(F\otimes \cO_C)^*)$ parametrizing $\wa$-stable coherent systems of $Y$ with  $\coker(\ev_V)= F$. This implies that $\cW$ depends on  $1+(r-k)^2(p_a(C)-1)$ parameters. We claim that this is not possible. Any $F\in \cW$ fits into an exact sequence
$$ 0 \to F \to \bigoplus_{i=1}^{\gamma} F_i \to T \to 0, $$
where $F_i$ is the restriction of $F$ modulo torsion to $C_i$ and $T$ is a torsion sheaf whose support is contained in the set of nodes (see \cite{Ses}).  A general element of $\cW$ is locally free and its restriction $F_i$ is a locally free sheaf of rank $r-k$ and degree $d_i$ on $C_i$.
Let 
$$\cW_i=\{F_i \,|\, F \in \cW, F \ \text{locally free} \},  \ i=1, \dots \gamma.$$
As $h^1(F \otimes {\omega}_C)=0$, we have $h^1(F_i \otimes {\omega}_{C_i}(\Delta_i))= 0$ for any $i$. This implies that the set ${\cW}_i$ is a bounded set, as, up to tensoring with a {\it fixed} ample line bundle, all the elements of $\cW_i$ can be seen as quotient of a fixed trivial bundle.
In particular, $\cW_i$ depends on at most $1 + (r-k)^2(g_i -1)$ parameters, by \cite[Remark 4.2]{BGN97}. 

A  general element of $\cW$  is obtained by glueing its restrictions  at the nodes by choosing an isomorphism between the fibers. By a dimensional count it turns out that   $\cW_i$ depends actually on  $1 + (r-k)^2(g_i -1)$ parameters, hence it contains all stable vector bundles of rank $r-k$ and degree $d_i$. 
By \cite{Tei95}, a general $F$ obtained in this way is actually $\w$-stable. This is impossible as we have seen that elements of $\cW$ are never $\w$-stable.
\end{proof}

\begin{remark}
It is easy to see that if $d\geq r$, then $\GwL(r\cdot \underline{1},d,k)$ is not empty whenever $0<k<r$.
\end{remark}

\begin{remark}
We stress that, a priori, there could  be other components of the moduli space $\GwL(r\cdot \underline{1},d,k)$ besides the components $Y_{d_1,\dots,d_{\gamma}}$ defined in Theorem \ref{THM:main}. These should contain coherent systems $(E,V)$ consisting of   depth one sheaves $E$  which are not locally free such that $\coker(\ev_V)$  are  not $\w$-stable.
\end{remark}

Let $(E,V) \in Y_{d_1,\dots,d_{\gamma}}$,  we can consider its restriction $(E_i,V_i)$ to the component $C_i$. We wonder if  $(E_i,V_i)$ is $\alpha$-semistable for some $\alpha$. This does never happen when $deg(E_i)<0$ by \cite[Cor 3.2]{BDGW}.

\begin{remark}
Notice that, in general, taking restriction does not preserve stability properties. For example, $\w$-stable locally free  sheaves can have restrictions which are not even semistable (see \cite{BFKer}).  
\end{remark}

We denote by $\mathcal{G}_{C_i,L}(r,d_i,k)$  the terminal moduli space for $\alpha$-stable coherent systems of type $(r,d_i,k)$ on the curve $C_i$.  
Then we have the following: 

\begin{corollary}
\label{COR:restriction}
 Let  $Y_{d_1,\dots,d_{\gamma}}$ be an irreducible component  of $\GwL(r\cdot \underline{1},d,k)$ defined in  Theorem \ref{THM:main}.
Assume moreover that $d_i >0$ and $k \leq d_i + (r-k)(g_i -1)$. Then for a general coherent system $(E,V) \in Y_{d_1,\dots,d_{\gamma}}$  the restriction $(E_i,V_i)$  is an element of $\mathcal{G}_{C_i,L}(r,d_i,k)$.

\end{corollary}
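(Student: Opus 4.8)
The plan is to restrict the defining BGN extension of a general $(E,V)\in Y_{d_1,\dots,d_{\gamma}}$ to $C_i$, to recognise the resulting coherent system as $(E_i,V_i)$, and then to invoke the classical description of terminal moduli spaces on smooth curves. By Theorem \ref{THM:main}(c) a general $(E,V)\in Y_{d_1,\dots,d_{\gamma}}$ has $E$ locally free and arises from a BGN extension
$$0\to V\otimes \OO_C\to E\to F\to 0,$$
with $F=\coker(\ev_V)$ locally free and $\w$-stable; moreover, for the general point $F$ lies in $X^s_{d_1,\dots,d_{\gamma}}$, so by the structure of $\U_{(C,\w)}((r-k)\cdot \underline{1},d)$ recalled above its restriction $F_i$ is \emph{stable} of degree $d_i$ on $C_i$.

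First I would apply Proposition \ref{PROP:restriction}. Its hypotheses are met: $F$ is locally free, $F_i$ is stable hence semistable, and by assumption $d_i>0$ and $k\leq d_i+(r-k)(g_i-1)$. Therefore there is a non-empty open $U\subset \Gr(k,H^1(F^*))$ over which the restriction to $C_i$ of the extension is again a BGN extension, this time on $C_i$, of $F_i$ and of type $(r,d_i,k)$:
$$0\to V\otimes \OO_{C_i}\to E_i\to F_i\to 0.$$
One must check that the general element of $Y_{d_1,\dots,d_{\gamma}}$ corresponds to a point of such a $U$; since $Y_{d_1,\dots,d_{\gamma}}$ is the closure of the Grassmannian fibration over $X^s_{d_1,\dots,d_{\gamma}}$ with fibre $\Gr(k,H^1(F^*))$ and $U$ is a fibrewise open condition, the locus mapping into these opens is open and dense, so a general $(E,V)$ indeed yields such a restricted extension.

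Next I would identify the coherent system of this restricted extension with $(E_i,V_i)$. Taking global sections in the commutative square relating $V\otimes \OO_C\to E$ and $V\otimes \OO_{C_i}\to E_i$, and using $H^0(\OO_C)=H^0(\OO_{C_i})=\C$, the canonical map $V=H^0(V\otimes \OO_C)\to H^0(V\otimes \OO_{C_i})=V$ is the identity, so $\rho_i$ carries $V$ onto the image of $H^0(V\otimes \OO_{C_i})$ in $H^0(E_i)$, which is precisely $V_i$. As $H^0$ is left exact this image has dimension $k$, whence $(E_i,V_i)$ has type $(r,d_i,k)$ and is exactly the coherent system attached to the restricted BGN extension.

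Finally I would conclude via the smooth-curve theory. The component $C_i$ is a smooth curve, $F_i$ is stable of positive degree, and $(E_i,V_i)$ is defined by a BGN extension of $F_i$ with linearly independent extension classes. By the specialisation to the smooth curve $C_i$ of the argument of Theorem \ref{THM:cs} (equivalently, by \cite{BGN97}), such a coherent system is $\alpha$-stable for $\alpha$ in the terminal range, and hence determines a point of $\mathcal{G}_{C_i,L}(r,d_i,k)$. I expect the genuine obstacle to lie in the genericity bookkeeping of the second step: one must ensure that a general point of $Y_{d_1,\dots,d_{\gamma}}$ lands simultaneously in $X^s_{d_1,\dots,d_{\gamma}}$ (so that $F_i$ is stable) and in the open locus $U$ of Proposition \ref{PROP:restriction} (so that the restricted classes remain independent), which is what guarantees that $(E_i,V_i)$ is a genuine BGN extension of the expected type rather than a degeneration of one.
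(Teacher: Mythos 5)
Your proposal is correct and follows essentially the same route as the paper: restrict the defining BGN extension of a general $(E,V)$ via Proposition \ref{PROP:restriction} (whose hypotheses hold since the general $F\in X^s_{d_1,\dots,d_\gamma}$ is locally free with $F_i$ stable of degree $d_i>0$), identify the restricted system with $(E_i,V_i)$, and invoke the smooth-curve fact that a BGN extension of a stable bundle yields an $\alpha$-stable coherent system for $\alpha$ large. The only divergence is cosmetic: the paper cites \cite{BG02} for that last step where you cite \cite{BGN97} (or a specialisation of Theorem \ref{THM:cs}), and your explicit genericity bookkeeping and the verification that $\rho_i(V)=V_i$ has dimension $k$ merely spell out what the paper leaves implicit.
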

\begin{proof}
Let $F \in X^s_{d_1,\dots,d_{\gamma}}$ be a general element, it is locally free and $F_i$  is stable with $deg(F_i)= d_i >0$ by assumption. Then, by Proposition \ref{PROP:restriction} it follows that  a general element $(E,V) \in \psi^{-1}(F)$ is  a BGN extension  of type $(r,d,k)$ whose restriction  $(E_i,V_i)$ to $C_i$ is a BGN extension of type $(r,d_i,k)$. By \cite{BG02}, the coherent system  $(E_i,V_i)$  is $\alpha$-stable for any $\alpha$ big enough. Hence $(E_i,V_i) \in \mathcal{G}_{C_i,L}(r,d_i,k)$.
\end{proof}

\begin{bibdiv}
\begin{biblist}

\bib{A79}{article}{
author={Altman, A.S.},
author={Kleiman, S.L.},
title={Bertini theorems for hypersurface sections containing a subscheme},
Journal={Comm.Algebra},
Volume={8},
year={1979},
pages={775-790.}
}

\bib{ACG}{book}{
   author={Arbarello, E.},
   author={Cornalba, M.},
   author={Griffiths, P. A.},
   title={Geometry of algebraic curves. Volume II},
   series={Grundlehren der Mathematischen Wissenschaften [Fundamental
   Principles of Mathematical Sciences]},
   volume={268},
   note={With a contribution by Joseph Daniel Harris},
   publisher={Springer, Heidelberg},
   date={2011},
   pages={xxx+963},
   doi={10.1007/978-3-540-69392-5},
}


\bib{Ber}{article}{
   author={Bertram, A.},
   title={Stable pairs and stable parabolic pairs},
   journal={J. Algebraic Geom.},
   volume={3},
   date={1994},
   number={4},
   pages={703--724},
   issn={1056-3911},
}

\bib{Bho07}{article}{
   author={Bhosle, U. N.},
   title={Brill-Noether theory on nodal curves},
   journal={Internat. J. Math.},
   volume={18},
   date={2007},
   number={10},
   pages={1133--1150},
   issn={0129-167X},
   doi={10.1142/S0129167X07004461},
}

\bib{Bho}{article}{
    author={Bhosle, U. N.},
    title={Coherent systems on a nodal curve},
    conference={title={Moduli spaces and vector bundles}},
    book={
      series={London Math. Soc. Lecture Note Ser.},
      volume={359},
      publisher={Cambridge Univ. Press, Cambridge},
   },
   date={2009},
   pages={437--455}
}

\bib{Bra}{article}{
   author={Bradlow, S. B.},
   title={Coherent systems: a brief survey},
   note={With an appendix by H. Lange},
   conference={
      title={Moduli spaces and vector bundles},
   },
   book={
      series={London Math. Soc. Lecture Note Ser.},
      volume={359},
      publisher={Cambridge Univ. Press, Cambridge},
   },
   date={2009},
   pages={229--264}
}

\bib{BD91}{article}{
   author={Bradlow, S. B.},
   author={Daskalopoulos, G. D.},
   title={Moduli of stable pairs for holomorphic bundles over Riemann
   surfaces},
   journal={Internat. J. Math.},
   volume={2},
   date={1991},
   number={5},
   pages={477--513},
   issn={0129-167X},
   review={\MR{1124279}},
   doi={10.1142/S0129167X91000272},
}

\bib{BDGW}{article}{
   author={Bradlow, Steven},
   author={Daskalopoulos, Georgios D.},
   author={Garc\'{\i}a-Prada, Oscar},
   author={Wentworth, Richard},
   title={Stable augmented bundles over Riemann surfaces},
   conference={
      title={Vector bundles in algebraic geometry},
      address={Durham},
      date={1993},
   },
   book={
      series={London Math. Soc. Lecture Note Ser.},
      volume={208},
      publisher={Cambridge Univ. Press, Cambridge},
   },
   date={1995},
   pages={15--67},
}

\bib{BGMN}{article}{
   author={Bradlow, S. B.},
   author={Garc\'{\i}a-Prada, O.},
   author={Mu\~{n}oz, V.},
   author={Newstead, P. E.},
   title={Coherent systems and Brill-Noether theory},
   journal={Internat. J. Math.},
   volume={14},
   date={2003},
   number={7},
   pages={683--733},
   issn={0129-167X},
   doi={10.1142/S0129167X03002009},
}

\bib{BG02}{article}{
   author={Bradlow, S. B.},
   author={Garc\'{\i}a-Prada, O.},
   title={An application of coherent systems to a Brill-Noether problem},
   journal={J. Reine Angew. Math.},
   volume={551},
   date={2002},
   pages={123--143},
   issn={0075-4102},
   doi={10.1515/crll.2002.079},
}

\bib{BGN97}{article}{
   author={Brambila-Paz, L.},
   author={Grzegorczyk, I.},
   author={Newstead, P. E.},
   title={Geography of Brill-Noether loci for small slopes},
   journal={J. Algebraic Geom.},
   volume={6},
   date={1997},
   number={4},
   pages={645--669},
   issn={1056-3911},
}

\bib{BB12}{article}{
   author={Bolognesi, M.},
   author={Brivio, S.},
   title={Coherent systems and modular subvarieties of $\mathcal{SU}_C(r)$},
   journal={Internat. J. Math.},
   volume={23},
   date={2012},
   number={4},
   pages={1250037, 23},
   issn={0129-167X},
   doi={10.1142/S0129167X12500371}
}


\bib{BFVec}{article}{
  author={Brivio, S.},
  author={Favale, F. F.},
  title={On vector bundle over reducible curves with a node},
  date={2019},
  note={To appear in {\it Advances in Geometry}},
  doi={10.1515/advgeom-2020-0010},
}

\bib{BFKer}{article}{
  author={Brivio, S.},
  author={Favale, F. F.},
  title={On Kernel Bundle over reducible curves with a node},
  date={2020},
  journal={\it International Journal of Mathematics},
  volume={31},
  date={2020},
  number={7},
  doi={10.1142/S0129167X20500548},
}

\bib{BFCoh}{article}{
  author={Brivio, S.},
  author={Favale, F. F.},
  title={Coherent systems on curves of compact type},
  date={2020},
  journal={\it Journal of Geometry and Physics},
  volume={158},
  date={2020},
  number={ },
  doi={10.1016/j.geomphys.2020.103850},
}

\bib{BFPol}{article}{
  author={Brivio, S.},
  author={Favale, F. F.},
  title={Nodal curves and polarization with good properties},
  date={2020},
  journal={\it Preprint arXiv:2008.00753v1},
}




\bib{KN}{article}{
   author={King, A. D.},
   author={Newstead, P. E.},
   title={Moduli of Brill-Noether pairs on algebraic curves},
   journal={Internat. J. Math.},
   volume={6},
   date={1995},
   number={5},
   pages={733--748},
   issn={0129-167X},
   doi={10.1142/S0129167X95000316},
}

\bib{LP}{book}{
   author={Le Potier, J.},
   title={Lectures on vector bundles},
   series={Cambridge Studies in Advanced Mathematics},
   volume={54},
   note={Translated by A. Maciocia},
   publisher={Cambridge University Press, Cambridge},
   date={1997},
   pages={viii+251},
   isbn={0-521-48182-1},
}


\bib{New11}{article}{
   author={Newstead, P. E.},
   title={Existence of $\alpha$-stable coherent systems on algebraic curves},
   conference={
      title={Grassmannians, moduli spaces and vector bundles},
   },
   book={
      series={Clay Math. Proc.},
      volume={14},
      publisher={Amer. Math. Soc., Providence, RI},
   },
   date={2011},
   pages={121--139},
}

\bib{Ses}{book}{
   author={Seshadri, C. S.},
   title={Fibr\'{e}s vectoriels sur les courbes alg\'{e}briques},
   language={French},
   series={Ast\'{e}risque},
   volume={96},
   note={Notes written by J.-M. Drezet from a course at the \'{E}cole Normale
   Sup\'{e}rieure, June 1980},
   publisher={Soci\'{e}t\'{e} Math\'{e}matique de France, Paris},
   date={1982},
   pages={209},
}
 
\bib{Tei91}{article}{
   author={Teixidor i Bigas, M},
   title={Moduli spaces of (semi)stable vector bundles on tree-like curves},
   journal={Math. Ann.},
   volume={290},
   date={1991},
   number={2},
   pages={341--348},
   issn={0025-5831},
   doi={10.1007/BF01459249},
} 
    
\bib{Tei95}{article}{
   author={Teixidor i Bigas, M.},
   title={Moduli spaces of vector bundles on reducible curves},
   journal={Amer. J. Math.},
   volume={117},
   date={1995},
   number={1},
   pages={125--139},
   issn={0002-9327},
   doi={10.2307/2375038},
}    
    
\end{biblist}
\end{bibdiv}

\end{document}